\documentclass[12pt,reqno]{amsart}
\usepackage{amssymb}
\usepackage{amsmath}
\usepackage{amsthm}
\usepackage{eucal}
\usepackage{color}
\usepackage{amsfonts}
\usepackage{enumitem}
\usepackage[pdftex]{graphicx}
\usepackage{url}
\usepackage[T1]{fontenc}
\usepackage{hyperref}

\newcommand{\R}{\mathbb R}


\newtheorem{theorem}{Theorem}[section]

\newtheorem{remark}[theorem]{Remark}

\newtheorem{lemma}[theorem]{Lemma}
\newtheorem{corollary}[theorem]{Corollary}

\numberwithin{equation}{section}

\title[Asymptotics for solutions of the BO equation]{On Local Energy decay for solution of the Benjamin-Ono equation}

\author[R. Freire]{Ricardo Freire}
\address[R. Freire] {IMPA\\ Estrada Dona Castorina 110, Rio de Janeiro 22460-320, RJ Brazil}
\email{rickcar8@impa.br}

\author[F. Linares]{Felipe Linares}
\address[F. Linares] {IMPA\\ Estrada Dona Castorina 110, Rio de Janeiro 22460-320, RJ Brazil}
\email{linares@impa.br}

\author[C. Mu\~noz]{Claudio Mu\~noz}
\address[C. Mu\~noz]{CNRS and Departamento de Ingenier\'ia Matem\'atica DIM-CMM UMI 2807-CNRS \\ Universidad de Chile, Santiago, Chile}
\email{cmunoz@dim.uchile.cl}

\author[G. Ponce]{Gustavo Ponce}
\address[G. Ponce]{Department  of Mathematics\\
University of California\\
Santa Barbara, CA 93106\\
USA.}
\email{ponce@math.ucsb.edu}

\keywords{Benjamin-Ono equation, Asymptotic behavior, decay}
\subjclass[2020]{Primary: 37K15, 35Q53. Secondary: 35Q51, 37K10}

\begin{document}

\begin{abstract}
We consider the long time dynamics of large solutions to the Benjamin-Ono equation. Using virial techniques, we describe regions of space where every solution in a suitable Sobolev space must decay to zero along sequences of times. Moreover, in the case of exterior regions, we prove complete decay for any sequence of times. The remaining regions not treated here are essentially the strong dispersion and soliton regions.
\end{abstract}

\maketitle

{\footnotesize
\noindent
{\it Resum\'e}. On consid\`ere la dynamique en temps longs de l'{}\'equation de Benjamin-Ono. En utilisant des techniques du viriel, on d\'ecrit les r\'egions du space o\`u toute solution do\^it d\'ecroire vers z\'ero dans une espace de Sobolev bien choisi, au moins s\^ur une suite de temps. Puis, on montre la d\'ecroissance compl\`ete dans le cas de r\'egions ext\'erieures. Les r\'egions pas trait\'ees ici sont celles avec de la dispersion forte et la region des solitons. 
}

 \section{Introduction}

 We consider the initial value problem (IVP) associated to the Benjamin-Ono (BO) equation
\begin{equation}\label{BO}
\begin{cases}
\partial_tu - \mathcal{H}\partial^2_xu + u\partial_xu = 0, \hskip10pt x,t\in\R,\\
u(x,0)=u_0(x)
\end{cases}
\end{equation}
where $u = u(x,t) $ is a real-valued function and $\mathcal{H}$ is the Hilbert transform, defined on the line as
\begin{equation}
 \mathcal{H}f(x) = {\rm p.v.}\frac{1}{\pi}\int_{\mathbb{R}}\frac{f(y)}{x-y}dy.
\end{equation}
The BO equation was first deduced in the context of long internal gravity waves in a stratified fluid \cite{Ben,On}. Later, the BO equation was shown to be completely integrable  (see \cite{AblowitzSegur} and references therein).

In particular, it possesses an infinite number of conservation laws, being  the first  three the following : 
\begin{equation}
  \begin{split}
  \label{claw}
  I_1(u) &= \int_{\mathbb{R}} u\, dx,\\
 I_2(u)&=  M(u) = \int_{\mathbb{R}}u^2\,dx,\\
   I_3(u)&=E(u) = \int_{\mathbb{R}}\left(\frac{1}{2}|D^{1/2}u|^2 + \frac{1}{6}u^3  \right) \,dx ,
\end{split}
\end{equation}
where $\widehat{D^sf}(\xi)=|\xi|^s\,\widehat{f}(\xi)$. 

 The $k$-conservation law, $I_k(\cdot), \,k\geq 2,$ provides a global in time \it a priori \rm estimate of the norm $\,\|D^{(k-2)/2}u(t)\|_{L^2}$ of the solution $u=u(x,t)$ of the  \eqref{BO}. 

The IVP \eqref{BO} has been extensively studied, especially, the local well-posedness (LWP) and global well-posedness (GWP) measured in the Sobolev scale $H^s(\mathbb{R})=(1-\partial^2_x)^{-s/2}L^2(\mathbb{R})$, $s \in \mathbb{R}$.  In this regard,  one has the following list of  works: Iorio \cite{Iorio}, Abdelouhab et al. \cite{Bona}, Ponce \cite{Poncegwp},  Koch-Tzvetkov \cite{KochTzvetkov}, Kenig and Koenig \cite{kenigkoenig}, Tao \cite{Tao}, Burq-Planchon \cite{BurqPlanchon}, Ionescu-Kenig \cite{I-Ke}, Molinet-Pilod \cite{MolinetPilod} and Ifrim-Tataru \cite{IfTat}, among others. In particular, in \cite{I-Ke} the global well-posedness in $L^2(\R)$ of the IVP \eqref{BO} was established. For further details and results concerning the IVP associated to the BO equation we refer to Saut \cite{Sa19}.

It should be pointed out that in \cite{MolinetSautTzvetkov} it was proved  that none well-posedness for the IVP \eqref{BO} in $H^s(\mathbb{R})$ for any $ s \in \mathbb{R}$  can be established by an argument based only the contraction principle argument.

We recall that the BO equation possesses traveling wave solutions (solitons) $u(x,t)=\phi(x-t)$ of the form
\begin{equation}
\label{twBO}
\phi(x)=\frac{4}{1+x^2},
\end{equation}
which is smooth and exhibits a  mild decay.

\medskip

In this work, we are interested in the asymptotic behavior of solutions to the IVP \eqref{BO}. In fact, we shall deduce some decay properties for solutions of \eqref{BO}  as time evolves.


 Our  main results in this work are the following:

\begin{theorem}\label{L2BO}
Let $u_0 \in L^2(\mathbb{R}) $ and  $u = u(x,t)$ be the global in time solution of the IVP \eqref{BO} such that 
$$u \in C(\mathbb{R}:L^2(\mathbb{R}))\cap L^{\infty}(\mathbb{R}: L^2(\mathbb{R})).$$
Then 
  \begin{equation}\label{EQ1}
   \liminf_{t \to \infty }\int_{B_{t^b}(0)}u^2(x,t)\, dx = 0,
  \end{equation}
 where $B_{t^b}(0)$ denotes the ball centered in the origin with radius $t^b$,
 \begin{equation}
  B_{t^b} (0):= \{x \in \mathbb{R} : |x| < t^b\} \quad \mbox{with}\quad 0 < b < \frac{2}{3}.
 \end{equation} 
 
Moreover, there exist a constant $C > 0$ and an increasing sequence of times $t_n \to \infty $ such that 
    \begin{equation}\label{EQ2}
    \int_{B_{t_n^b}(0)}u^2(x,t_n)\, dx \leq \frac{C}{\log^{\frac{(1-b)}{b}}(t_n)}.
    \end{equation}

\end{theorem}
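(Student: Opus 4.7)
The strategy is to build a virial-type monotonicity formula localized to a moving spatial window of radius $t^b$. Choose an even bump $\chi\in C_c^\infty(\R)$ with $\chi\equiv 1$ on $[-1,1]$ and $\supp\chi\subset[-2,2]$, set $\lambda(t):=t^b$, and consider
\begin{equation*}
V(t):=\int_{\R}\chi\!\left(\frac{x}{\lambda(t)}\right)u^2(x,t)\,dx,\qquad t\geq 1.
\end{equation*}
Conservation of the $L^2$-mass gives $0\leq V(t)\leq \|u_0\|_{L^2}^2$, and $V(t)\geq\int_{B_{t^b}(0)}u^2\,dx$. Producing a sequence $t_n\to\infty$ along which $V(t_n)\leq C/\log^{(1-b)/b}(t_n)$ would therefore imply both \eqref{EQ1} and \eqref{EQ2}.

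Differentiating $V$ in time and inserting the BO equation yields $V'(t)=A(t)+D(t)+N(t)$. The moving-weight piece $A(t)=-(b/t)\!\int(x/\lambda)\chi'(x/\lambda)u^2\,dx$ is of size $O(\|u_0\|_{L^2}^2/t)$ and has a definite sign because $-y\chi'(y)\geq 0$ by evenness of $\chi$. The dispersive contribution $D(t)=2\!\int\chi(x/\lambda)\,u\,\mathcal{H}u_{xx}\,dx$, after two integrations by parts using $\mathcal{H}^*=-\mathcal{H}$ and the identity $2\!\int\phi\, u_x\mathcal{H}u_x=-\!\int u_x[\mathcal{H},\phi]u_x$, reduces to a coercive Kato-smoothing term $c\!\int(\chi'(x/\lambda)/\lambda)|D^{1/2}u|^2\,dx$ plus lower-order commutator remainders of size $O(t^{-2b})$ controlled through $\chi''$ and $\|u_0\|_{L^2}$. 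The cubic term $N(t)=\tfrac{2}{3\lambda}\!\int\chi'(x/\lambda)u^3\,dx$ arises from $-\!\int\chi(x/\lambda)\partial_x(u^3/3)\,dx$.

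The main obstacle is controlling $N(t)$ for pure $L^2$ data, since no pointwise bound on $u$ is available. The natural route is the $1$D fractional Gagliardo--Nirenberg inequality $\|f\|_{L^3}^3\leq C\|f\|_{L^2}^2\|D^{1/2}f\|_{L^2}$ applied to $f=\sqrt{\chi'(x/\lambda)}\,u$, combined with a Kato--Ponce fractional-Leibniz estimate to commute $D^{1/2}$ past the square root of the cutoff at a price of $O(\lambda^{-1/2})$. A Cauchy--Schwarz split then absorbs half of the Kato-smoothing term from $D(t)$ and leaves a remainder of size $C\|u_0\|_{L^2}^4\,\lambda(t)^{-1}$; the restriction $b<2/3$ is the exact threshold at which the scaling allows the dispersive gain to dominate the cubic loss.

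Integrating the resulting virial inequality on $[1,T]$ and using $|V|\leq \|u_0\|_{L^2}^2$ produces a time-averaged estimate of the form $\int_1^T V(t)\,\omega_b(t)\,dt\leq C$ for an explicit weight $\omega_b$ dictated by the absorption just described. A dyadic pigeonhole argument on intervals $[T_n,2T_n]$ with $T_n=e^n$ then extracts a subsequence along which $V(t_n)\leq C/\log^{(1-b)/b}(t_n)$, giving \eqref{EQ2}; \eqref{EQ1} follows immediately. The technical heart of the argument is the cubic absorption above, and the required slowness $\lambda(t)=t^b$ with $b<2/3$ is precisely what constrains the final decay to the stated logarithmic rate rather than a polynomial one.
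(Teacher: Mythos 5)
There is a genuine gap, and it sits exactly at the point you identify as ``the technical heart'': the cubic term cannot be absorbed the way you propose, and this is why the paper does not use a quadratic virial for this theorem. With $u_0\in L^2$ only, there is no global bound on $\|D^{1/2}u(t)\|_{L^2}$, so the Gagliardo--Nirenberg bound $\|f\|_{L^3}^3\lesssim\|f\|_{L^2}^2\|D^{1/2}f\|_{L^2}$ must be absorbed into the local smoothing term $-\int (D^{1/2}u)^2\,\partial_x\varphi\,dx$ coming from the dispersion. But for your even, compactly supported $\chi$ the derivative $\partial_x\varphi$ changes sign, so this term has \emph{no} definite sign and there is nothing coercive to absorb into. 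Even if you replace $\chi$ by a monotone profile so that the smoothing term is signed, the Cauchy--Schwarz split leaves a remainder of size $\|u_0\|_{L^2}^4\,\lambda(t)^{-1}=O(t^{-b})$, which is not integrable in time for any $b<1$, so the virial inequality does not close; the claim that $b<2/3$ is ``the exact threshold'' for this absorption is not substantiated by the scaling. A second structural problem: the signed term $-\frac{b}{t}\int \frac{x}{\lambda}\chi'(\frac{x}{\lambda})u^2\,dx$ is supported on the annulus $\{t^b\le|x|\le 2t^b\}$, so integrating the virial inequality controls $\int_1^T\frac{1}{t}\int_{\text{annulus}}u^2\,dx\,dt$, not $\int_1^T V(t)\,\omega_b(t)\,dt$; there is no mechanism in your argument that makes $V$ itself small along a sequence.

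The paper's proof avoids all of this by taking a functional \emph{linear} in $u$,
\begin{equation*}
\mathcal{I}(t)=\frac{1}{\mu(t)}\int u(x,t)\,\psi_{\sigma}\!\left(\frac{x}{\mu_1(t)}\right)\phi_{\delta}\!\left(\frac{x}{\mu_1^q(t)}\right)dx,
\qquad \mu_1(t)=\frac{t^b}{\log t},\quad \mu(t)=t^{1-b}\log^2 t,
\end{equation*}
the key idea imported from \cite{MMPP} (see Remark \ref{rem1}). Linearity changes the bookkeeping completely: the quadratic nonlinearity $u\partial_xu=\tfrac12\partial_x(u^2)$ integrates by parts into the \emph{coercive} term $\frac{1}{2\mu\mu_1}\int u^2\,\psi_\sigma'\,\phi_\delta\,dx$ (no cubic term ever appears), while the dispersive term $\mathcal{H}\partial_x^2u$ is linear in $u$ and is estimated crudely by Cauchy--Schwarz against derivatives of the weight --- no smoothing estimate is needed. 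The weight $\psi_\sigma'=\phi(\cdot/\sigma)$ is bounded below on the whole ball $B_{t^b}$ for the chosen $\mu_1$, which is what converts the time-integrated bound into $\int\frac{1}{t\log t}\int_{B_{t^b}}u^2\,dx\,dt<\infty$ and hence \eqref{EQ1}--\eqref{EQ2}; the restriction $b<2/3$ arises from the integrability of the error terms (e.g.\ $(b\log t-1)^2 t^{3b-3}\log^{-6}t$) under the normalization $\mu=t^{1-b}\log^2t$, not from a dispersive-versus-nonlinear balance. A quadratic virial of the type you propose is indeed used in the paper, but only in Theorems \ref{to-the-right} and \ref{to-the-left}, where the $H^1$ hypothesis supplies the $L^\infty$ and $\dot H^{1/2}$ control that your argument would need.
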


As a consequence of the proof of this theorem we have:

\begin{corollary}\label{corL2BO} Let $u_0 \in L^2(\mathbb{R}) $ and  $u = u(x,t)$ be the global in time solution of the IVP \eqref{BO} such that 
$$u \in C(\mathbb{R}:L^2(\mathbb{R}))\cap L^{\infty}(\mathbb{R}: L^2(\mathbb{R})).$$
Then 
 \begin{equation}\label{notcentered}
   \liminf_{t \to \infty }\int_{B_{t^b}(t^m)}u^2(x,t)\, dx = 0,
  \end{equation}
  where
  \begin{equation}
 B_{t^b}(t^m) := \{ x \in \mathbb{R} :|x - t^m|< t^b\},
\end{equation}
with
\begin{equation}\label{conditions-on-m}
0 < b < \frac{2}{3} {\hskip15pt\text{and} \hskip15pt}    0  \leq m < 1 - \frac{3}{2}b.
\end{equation}
\end{corollary}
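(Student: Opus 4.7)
The plan is to reproduce the proof of Theorem \ref{L2BO} with the bump weight translated by $t^m$. Concretely, let $\varphi$ denote the weight used there and set
\begin{equation*}
\mathcal{I}(t) := \int_{\R}\varphi\!\Bigl(\frac{x - t^m}{t^b}\Bigr)\,u^2(x,t)\,dx.
\end{equation*}
Because $\|u(t)\|_{L^2} = \|u_0\|_{L^2}$, $\mathcal{I}(t)$ is uniformly bounded in $t$, and if $\varphi\ge\carac_{[-1,1]}$ then $\mathcal{I}(t)$ dominates the integral in \eqref{notcentered}, so it is enough to prove $\liminf_{t\to\infty}\mathcal{I}(t)=0$.

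First I would differentiate $\mathcal{I}$ in time. The contributions coming from the evolution of $u$ are structurally the same as in the centered case: the dispersive commutator with $\mathcal{H}\partial_x^2 u$ and the nonlinear term $\tfrac{2}{3t^b}\int\varphi'\bigl(\tfrac{x-t^m}{t^b}\bigr)u^3\,dx$ depend only on the scale $t^b$ of the weight and are translation invariant in $x$, so the bounds of Theorem \ref{L2BO} apply verbatim with constants independent of the center $t^m$. The genuinely new term comes from the time dependence of the translation,
\begin{equation*}
E(t) \;=\; -\frac{m\, t^{m-1}}{t^b}\int_{\R}\varphi'\!\Bigl(\frac{x-t^m}{t^b}\Bigr)u^2(x,t)\,dx,
\end{equation*}
and by $L^2$ conservation and boundedness of $\varphi'$ it satisfies $|E(t)|\le C\, m\, t^{m-1-b}\|u_0\|_{L^2}^2$.

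Next I would rerun the time-integration step of Theorem \ref{L2BO} on a dyadic window $[T,2T]$: that step extracts a coercive lower bound from the sign of the dispersive term and then balances it against the oscillation of $\mathcal{I}$ plus the cubic error, producing the $\liminf=0$ subsequential conclusion and the quantitative rate in \eqref{EQ2}. In the shifted setting, the same chain of inequalities produces an extra shift-error contribution $\int_T^{2T}|E(t)|\,dt\lesssim T^{m-b}$, and the threshold $m<1-\tfrac{3}{2}b$ is precisely what forces this shift error to remain asymptotically smaller than the coercive main gain (which, in the centered argument, is already paired against the cubic term through the constraint $b<2/3$). Under this strict inequality the contradiction/$\liminf$ step of Theorem \ref{L2BO} goes through unchanged and yields \eqref{notcentered}.

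The main obstacle is verifying the precise quantitative balance that produces the exponent $1-\tfrac{3}{2}b$: one has to bookkeep the competition, inside the virial identity, between the weight gradient of size $t^{-b}$, the cubic nonlinear interaction of size $t^{-2b}$ (which is responsible for the original constraint $b<2/3$), and the newly introduced shift term of size $mt^{m-1-b}$. Each estimate used in the proof of Theorem \ref{L2BO} must be re-run with $(x-t^m)$ in place of $x$ inside $\varphi$ and the error $E(t)$ carried to the end; translation invariance in $x$ makes the nonlinear and commutator bounds uniform in the position of the center, but the admissible range $0\le m<1-\tfrac{3}{2}b$ is dictated by the dyadic integration of $E(t)$ against the main coercive gain in the virial identity.
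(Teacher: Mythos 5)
There is a genuine gap, and it begins at your very first line: your functional is quadratic in $u$, whereas the proof of Theorem \ref{L2BO} that you intend to transplant is built on the functional \eqref{NC-1}, which is \emph{linear} in $u$, carries the decaying prefactor $1/\mu(t)$ with $\mu(t)=t^{1-b}\log^2t$, and uses two nested weights $\psi_\sigma(\cdot/\mu_1)\,\phi_\delta(\cdot/\mu_1^q)$ at the scales \eqref{parameters}. This is not cosmetic. Differentiating your $\int u^2\varphi\,dx$ produces exactly the virial identity of Lemma \ref{virial-BO}, whose terms $-\int (D_x^{1/2}u)^2\partial_x\varphi\,dx$ and $\tfrac23\int u^3\partial_x\varphi\,dx$ admit no a priori bound when $u_0$ is only in $L^2(\R)$: there is no control of $\|D^{1/2}u(t)\|_{L^2}$ or $\|u(t)\|_{L^3}$ at this regularity. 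That is precisely why the paper reserves the quadratic functional for Theorem \ref{H1BO} and Theorems \ref{to-the-right}--\ref{to-the-left}, where $H^{1/2}$ or $H^1$ data are assumed. In particular your claim that ``the bounds of Theorem \ref{L2BO} apply verbatim'' to the commutator and cubic terms is unfounded, since no cubic term occurs in that proof at all. Likewise, the coercive quantity there is $\frac{1}{\mu\mu_1}\int u^2\psi'_\sigma\phi_\delta\,dx$ with $\mu(t)\mu_1(t)=t\log t$, and the $\liminf$ conclusion comes from the non-integrability of $1/(t\log t)$ (Lemma \ref{intl2bounded}), not from a dyadic-window balance against a sign-definite dispersive term.

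The threshold \eqref{conditions-on-m} is also not actually derived: you flag it as ``the main obstacle'' and assert that the dyadic integration of $E(t)$ dictates it. In the paper the translation $\rho(t)=\pm t^m$ produces two new terms that are \emph{linear} in $u$, namely $-\frac{\rho'(t)}{\mu_1\mu}\int u\,\psi'_\sigma\phi_\delta\,dx$ and $-\frac{\rho'(t)}{\mu_1^q\mu}\int u\,\psi_\sigma\phi'_\delta\,dx$; these are estimated by Cauchy--Schwarz against $\|u_0\|_{L^2}$ and the $L^2$ norms of the localized weights, yielding decay of order $t^{-(4-2m-b)/2}\log^{-3/2}t$ and $t^{-(4-2b-2m+bq)/2}\log^{-(4-q)/2}t$, which are integrable on $\{t\gg1\}$ for $m\le 1-\tfrac b2$. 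This is quite different from your bound $|E(t)|\lesssim m\,t^{m-1-b}$, which is integrable only for $m<b$ and would give a strictly worse admissible range. To repair the argument you should (i) replace your functional by $\mathcal I_\rho(t)=\frac1{\mu(t)}\int u\,\psi_\sigma\bigl(\frac{x-\rho(t)}{\mu_1(t)}\bigr)\phi_\delta\bigl(\frac{x-\rho(t)}{\mu_1^q(t)}\bigr)\,dx$, for which all the estimates of Lemmas \ref{Functionalbounded}--\ref{intl2bounded} are indeed translation invariant, and (ii) estimate the two center-motion terms explicitly by Cauchy--Schwarz as above.
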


\medskip

\begin{remark}\label{rem1}

Under the additional hypothesis:
\begin{equation}
\label{add-hyp}
\begin{aligned}
\text{There exist } &\;a\in [0,1/2)\;\text{and} \; c_0>0 \;\text{such that for all }\;T>0\\
&\sup_{t\in[0,T]}\int_{-\infty}^{\infty} |u(x,t)|\,dx\leq c_0\,(1+T^2)^{a/2},
\end{aligned}
\end{equation}
a related result to those in Theorem \ref{L2BO} and Corollary \ref{corL2BO} was established in \cite{MP}. The argument of the proof in \cite{MP} was based on virial identities (or weighted energy estimate) first appearing in \cite{MP1} in the study of the long time behavior of solution of the generalized Korteweg-de Vries (KdV) equation. In \cite{MPS} and  \cite{LMP} this was extended, adapted  and generalized to others one dimensional dispersive nonlinear systems  under an assumption similar to that in \eqref{add-hyp}.

 In \cite{MMPP} a key idea was introduced to remove the hypothesis \eqref{add-hyp} and to extend the argument to higher dimensional dispersive model. This approach was further implemented in  \cite{mendez2021long} and \cite{Linares2021OnLT} for systems.

\end{remark}

Next, we present a result concerning the decay of solutions in the energy space :

\begin{theorem}\label{H1BO}
  Let $u_0 \in H^{1/2}(\mathbb{R}) $ and $u = u(x,t)$ be the global in time solution of the  IVP \eqref{BO} such that 
  $$u \in C(\mathbb{R}:H^{1/2}(\mathbb{R}))\cap L^{\infty}(\mathbb{R}: H^{1/2}(\mathbb{R})).$$
   Then 
  \begin{equation}\label{EQ3}
   \liminf_{t \to \infty }\int_{B_{t^b}(0)}\left(u^2(x,t)+|D_x^{1/2}u(x,t)|^2\right)\, dx = 0,\;\;\;\;0<b<\frac{2}{3}.
  \end{equation}
\end{theorem}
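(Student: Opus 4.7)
\emph{Plan.} My approach would be to upgrade the virial argument used for Theorem~\ref{L2BO} from the mass level to the energy level, exploiting the additional structure supplied by the conservation law $I_3$. Since $u_0\in H^{1/2}(\R)$, the solution $u(t)$ is uniformly bounded in $H^{1/2}(\R)$ through the conservation of mass $I_2$ and energy $I_3$ (together with Gagliardo-Nirenberg, which controls $\int u^3$ in terms of $\|u\|_{H^{1/2}}$). The natural functional to consider is then
\begin{equation*}
\mathcal{J}(t):=\int_{\R}\psi\!\left(\frac{x}{t^b}\right)\!\left(\frac12|D^{1/2}u|^2+\frac16 u^3\right)(x,t)\,dx,
\end{equation*}
where $\psi$ is the same bounded, smooth, non-decreasing profile driving the proof of Theorem~\ref{L2BO}, with $\psi'\ge 0$ a bump supported in $\{|x|\le 1\}$, so that $\psi'(x/t^b)$ behaves as a smooth indicator of $B_{t^b}(0)$.

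\emph{Virial computation.} Differentiating $\mathcal{J}$ in time and writing the local energy conservation law in flux form $\p_t(\tfrac12|D^{1/2}u|^2+\tfrac16 u^3)=\p_x\mathcal{F}(u)$, which is available for BO from its Hamiltonian structure and involves fractional derivatives of $u$ of order at most one, I obtain
\begin{equation*}
\mathcal{J}'(t)=-\frac{b}{t^{b+1}}\int x\,\psi'\!\left(\frac{x}{t^b}\right)\!\left(\frac12|D^{1/2}u|^2+\frac16 u^3\right)dx+\frac{1}{t^b}\int \psi'\!\left(\frac{x}{t^b}\right)\mathcal{F}(u)\,dx.
\end{equation*}
The dispersive piece of $\mathcal{F}(u)$, after a Martel-Merle type fractional commutator identity, yields a positive contribution of size $\frac{c}{t^b}\int \psi'(x/t^b)|D u|^2\,dx$ modulo remainders. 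Since this derivative gain goes beyond the a priori $H^{1/2}$ bound, following the MMPP scheme alluded to in Remark~\ref{rem1} I would couple $\mathcal{J}$ with a lower-order companion virial for $\int\psi(x/t^b)u^2\,dx$ and combine the two so as to isolate, after the appropriate cancellations, a localized $|D^{1/2}u|^2$ term on the right-hand side with a controllable sign.

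\emph{Time integration and main obstacle.} Integrating in time over $[0,T]$, using that $\mathcal{J}(t)$ is uniformly bounded by the conservation laws, and combining the result with the mass estimate furnished by Theorem~\ref{L2BO}, I would arrive at an inequality of the shape
\begin{equation*}
\int_0^T\frac{1}{t^b}\int \psi'\!\left(\frac{x}{t^b}\right)\!\left(u^2+|D^{1/2}u|^2\right)dx\,dt\le C(T),
\end{equation*}
with $C(T)$ growing slowly enough that the same chain-of-scales mean-value argument employed in the proof of Theorem~\ref{L2BO} produces an increasing sequence $t_n\to\infty$ with $\int_{B_{t_n^b}(0)}(u^2+|D^{1/2}u|^2)\,dx\to 0$. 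The hard part will be the interaction of the cutoff $\psi(\cdot/t^b)$ with the nonlocal operators $\mathcal{H}$ and $D^{1/2}$: cutting introduces Calder\'on-type commutators $[\mathcal{H},\psi(\cdot/t^b)]$ and $[D^{1/2},\psi(\cdot/t^b)]$ that contribute $O(t^{-b})$-sized pieces in appropriate $L^2$ senses, and the cubic term $\int\psi'\cdot u^3$ must be tamed using the Sobolev embedding $H^{1/2}(\R)\hookrightarrow L^3(\R)$. Keeping all these error terms integrable in time and dominated by the main positive contribution, \emph{without} invoking a stronger a priori assumption of the type \eqref{add-hyp}, is the technical heart of the argument, and is exactly the point at which the range $b<\tfrac{2}{3}$ is dictated.
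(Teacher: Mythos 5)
There is a genuine gap, and it lies in the choice of functional. You build the virial on the energy density $\tfrac12|D^{1/2}u|^2+\tfrac16u^3$, which forces you to face a localized $|D_xu|^2$ term that the a priori $H^{1/2}$ bound cannot control; you then propose to ``couple'' this with a mass-level companion so as to isolate $|D^{1/2}u|^2$, but that coupling is never specified and is in fact the whole difficulty. The point you are missing is that no energy-level virial is needed: for BO the \emph{mass}-level virial already produces the localized half-derivative. Writing $\frac{d}{dt}\int u^2\varphi\,dx$ and using $\partial_tu=\mathcal H\partial_x^2u-u\partial_xu$, the dispersive contribution $-2\int u\,\mathcal H\partial_xu\,\partial_x\varphi\,dx$ equals $-2\int (D^{1/2}u)^2\partial_x\varphi\,dx$ plus a commutator $\int u\,D^{1/2}\big[D^{1/2};\partial_x\varphi\big]u\,dx$ controlled by Lemma \ref{comm2}; this is exactly the Kato-smoothing term $A_2$ in Lemma \ref{virial-BO}. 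Accordingly the paper takes $\mathcal J(t)=\mu(t)^{-1}\int u^2\psi_\sigma(x/\mu_1(t))\,dx$ with the same two scales $\mu,\mu_1$ as in Theorem \ref{L2BO}, reads off $\frac{1}{t\log t}\int_{B_{t^b}}|D^{1/2}u|^2$ from its derivative, and disposes of the remaining terms with Lemmas \ref{CC} and \ref{comm2}. Your separate assertion that the BO energy density obeys a pointwise flux law $\partial_x\mathcal F(u)$ with $\mathcal F$ of order at most one is also not something you can take for granted for a nonlocal equation; the commutators do not organize themselves into a local divergence.

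The second gap is the cubic term. Bounding $\int\psi'(x/t^b)\,u^3\,dx$ by $\|u\|_{L^3}^3\lesssim\|u\|_{H^{1/2}}^3$ produces an error of size $t^{-b}$ (or $(t\log t)^{-1}$ in the paper's normalization), which is \emph{not} integrable in time, so it cannot be absorbed as a remainder. The paper instead localizes Gagliardo--Nirenberg on unit intervals with the partition $\zeta_n$ (Lemmas \ref{GNSinequality} and \ref{leibnizhomog.}) to show $\int |u|^3\phi_\sigma\lesssim\int u^2\phi_\sigma$, and then invokes the already established space-time bound for the localized mass (Lemma \ref{intl2bounded}) to conclude that this contribution is integrable on $\{t\gg1\}$. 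Without some version of this reduction of the cubic term to the $L^2$-level result of Theorem \ref{L2BO}, the time-integration step in your plan does not close.
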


Now, we consider the asymptotic decay of the solution in a domain moving in time in the right direction:

\begin{theorem}
\label{to-the-right}
There exists a constant $C_0>0$ depending only on $\|u_0\|_{H^1}$ such that the global  solution 
\[
u\in C(\mathbb R:H^1(\mathbb R))\cap L^{\infty}(\mathbb R:H^1(\mathbb R))
\]
of IVP \eqref{BO} satisfies
\begin{equation}
\label{main}
\lim_{t\to \infty} \| u(t)\|_{L^2(x\geq C_0t)}=0.
\end{equation}
\end{theorem}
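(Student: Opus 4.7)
The strategy is a virial (weighted energy) estimate in a moving frame of speed $C_0$, combined with a translation trick to sharpen the conclusion. Fix a smooth nondecreasing weight $\phi\in C^\infty(\R;[0,1])$ with $\phi\equiv 0$ on $(-\infty,-1]$, $\phi\equiv 1$ on $[0,\infty)$, satisfying $|\phi^{(k)}|\lesssim \phi'$ for $k=2,3$ (e.g.\ built from $\tanh$). For $\tau\in\R$ set
\[
\mathcal{I}_\tau(t):=\int_\R u(x,t)^2\,\phi(x-C_0 t-\tau)\,dx,
\]
so that $\|u(t)\|_{L^2(x\geq C_0 t+\tau)}^2\leq \mathcal{I}_\tau(t)$ since $\phi\equiv 1$ on $[0,\infty)$.

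Differentiating $\mathcal{I}_\tau$ and using the BO equation produces: (i) a transport term $-C_0\int u^2\phi'\,dx$; (ii) a nonlinear term $\tfrac{2}{3}\int u^3\phi'\,dx$ bounded by $\|u\|_{L^\infty}\int u^2\phi'\,dx$; and (iii) a dispersive term $2\int u\,\mathcal{H}u_{xx}\,\phi\,dx$. By the conservation of $M(u)$ and $E(u)$, $\|u(t)\|_{H^1}\leq M_1(\|u_0\|_{H^1})$ for all $t$, and Sobolev embedding gives $\|u(t)\|_{L^\infty}\leq C_1(\|u_0\|_{H^1})$. The dispersive term is handled by writing $\mathcal{H}u_{xx}=Du_x$, integrating by parts, and using the antisymmetry of $\mathcal{H}$ together with the commutator structure of $[\mathcal{H},\phi]$: this gives a coercive contribution $-c_0\int\phi'(D^{1/2}u)^2\,dx$ modulo error terms of the form $\int u^2|\phi'''|\,dx$, absorbable by the weight condition $|\phi'''|\lesssim\phi'$. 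Choosing $C_0$ large enough depending only on $\|u_0\|_{H^1}$, the transport term dominates, yielding
\[
\mathcal{I}_\tau'(t)\leq -\delta\int u^2\phi'(x-C_0 t-\tau)\,dx-c_0\int\phi'(D^{1/2}u)^2\,dx\leq 0
\]
for some $\delta>0$. Hence $\mathcal{I}_\tau$ is nonincreasing; denote its limit by $L_\tau\geq 0$. Integrating in $t$ also yields $\int_0^\infty\!\!\int u^2\phi'(x-C_0 t-\tau)\,dx\,dt<\infty$ and the analogous estimate with $(D^{1/2}u)^2$.

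For the shift argument, as $\tau\to\infty$ the integrability of $u_0$ gives $\mathcal{I}_\tau(0)\leq \int_{x\geq\tau-1}u_0^2\,dx\to 0$, hence $L_\tau\leq \mathcal{I}_\tau(0)\to 0$. In particular $\|u(t)\|_{L^2(x\geq C_0 t+\tau_\varepsilon)}\to 0$ for a suitable $\tau_\varepsilon=\tau_\varepsilon(\varepsilon)$. To replace $\tau_\varepsilon$ by $0$ one must show that the mass in the finite-width strip $\{C_0 t\leq x\leq C_0 t+\tau_\varepsilon\}$ also vanishes as $t\to\infty$; this is obtained by combining the space-time integrability displayed above, applied to a one-parameter family of shifts $\tau\in[0,\tau_\varepsilon]$, with the observation that a BO soliton of speed $c$ has $\|\cdot\|_{H^1}\sim c^{3/2}$, so for $C_0$ large enough in terms of $\|u_0\|_{H^1}$ no localized structure can persist at speed $\geq C_0$.

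The main technical obstacle is the dispersive computation: BO's $\mathcal{H}\partial_x^2$ dispersion yields only a half-derivative Kato-type gain (weaker than KdV's $\partial_x^3$), and the nonlocal Hilbert transform requires delicate handling of the commutator $[\mathcal{H},\phi]$ to extract the coercive piece $\int\phi'(D^{1/2}u)^2\,dx$ with a definite sign. Closing the strip is the second subtle point; it hinges on the $H^1$ hypothesis via the soliton-speed bound, without which arbitrarily fast solitons would contradict any such decay statement.
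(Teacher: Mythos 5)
There is a genuine gap at the heart of your monotonicity step. You take a rigid traveling weight $\phi(x-C_0t-\tau)$ and claim that the dispersive contribution reduces to a signed term $-c_0\int \phi'\,(D^{1/2}u)^2\,dx$ plus errors ``of the form $\int u^2|\phi'''|\,dx$, absorbable by $|\phi'''|\lesssim\phi'$.'' That is not what the computation gives. Writing out $2\int u\,\mathcal H\partial_x^2u\,\varphi\,dx$ produces, besides the good Kato term $-\int (D^{1/2}u)^2\partial_x\varphi\,dx$, the two nonlocal commutator terms $-\int u\,\partial_x[\mathcal H;\varphi]\partial_xu\,dx$ and $-\int u\,D^{1/2}[D^{1/2};\partial_x\varphi]u\,dx$. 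The available bounds (Calder\'on's commutator estimate and Lemma \ref{comm2} of the paper) control these only by $c\,\|\partial_x^2\varphi\|_{L^\infty}\|u\|_{L^2}^2$ and $c\,\|\partial_x^2\varphi\|_{L^2}^{1/2}\|\partial_x^3\varphi\|_{L^2}^{1/2}\|u\|_{L^2}^2$; they are \emph{not} localized to $\supp\phi'$, so they cannot be absorbed into $-\delta\int u^2\phi'\,dx$. With your fixed-profile weight these bounds are constants in $t$, hence not integrable on $[0,\infty)$, and your differential inequality $\mathcal I_\tau'(t)\le 0$ does not follow. This is precisely the known obstruction to KdV-style almost-monotonicity for BO. The paper sidesteps it by using the self-similar weight $\chi\bigl(\frac{x-c_1}{c_0t}\bigr)$: then $\|\partial_x^2\varphi\|_{L^\infty}\sim (c_0t)^{-2}$ and $\|\partial_x^2\varphi\|_{L^2}^{1/2}\|\partial_x^3\varphi\|_{L^2}^{1/2}\sim (c_0t)^{-2}$, so the commutator errors are $O(t^{-2})$ and time-integrable, while the term $\int u^2\partial_t\varphi\,dx\le -\frac1t\int u^2\chi'\,dx$ still dominates the cubic term $\frac{2}{3}\int u^3\partial_x\varphi\,dx\le \frac{2\|u\|_\infty}{3c_0t}\int u^2\chi'\,dx$ once $c_0$ is large relative to $\|u_0\|_{H^1}$. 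Without that scaling (or a genuinely localized commutator estimate in the spirit of Kenig--Martel), your argument does not close.

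Two smaller points. First, your ``closing the strip'' step via a soliton-speed heuristic is not a proof and is also unnecessary: since $\tau_\varepsilon$ is a fixed constant, for $t\ge \tau_\varepsilon/(C_0'-C_0)$ one has $\{x\ge C_0't\}\subset\{x\ge C_0t+\tau_\varepsilon\}$, so enlarging the final speed from $C_0$ to any $C_0'>C_0$ (the paper takes $C_0=3c_0$ against a weight transitioning on $c_0t\le x-c_1\le 2c_0t$) disposes of the strip immediately. Second, your transport term $-C_0\int u^2\phi'\,dx$ would indeed beat the cubic term for $C_0$ large, so that part of the mechanism is sound; the failure is solely in the treatment of the nonlocal dispersive errors.
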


\begin{remark} \label{abc}\hskip10pt

\begin{enumerate}
\item This result is inspired in a similar one found in \cite{MaMe} for the generalized KdV equation. In fact, the proof in \cite{MaMe} is a generalization of the identity used in \cite{Ka} to establish the so called Kato local smoothing effect in solutions of the generalized KdV equation.   The proof for the KdV  is significantly simpler. In the case of the BO equation the proof follows  the virial identity obtained in Lemma \ref{virial-BO}, \eqref{id1}, and some commutator estimates, see the comments in Remark \ref{rem-imp} below.
\item

From the scaling argument, i.e. if $u(x,t)$ is a solution of the BO equation, then for any $\lambda>0$, $\,u_{\lambda}(x,t)=\lambda u(\lambda x,\lambda^2 t)$ is also a solution, one sees that for any $c>0$ one has a traveling wave solution (soliton), see \eqref{twBO},
$$
u_{c}(x,t)=c\, \phi (cx-c^2t)=c\,\phi(c(x-ct)).
$$
The speed of propagation of the soliton $c$ is proportional to its amplitude$\;c\,\| \phi\|_{\infty}$. This is consistent with the statement in \eqref{main}.

\item 
Combining \eqref{main} and the conservation laws of the BO equation and under the same hypothesis on Theorem 
\ref{to-the-right} one gets that for any $p\in(2,\infty]$ and any $C_0'>C_0$ 
\begin{equation}
\label{main-p}
\lim_{t\to \infty} \| u(t)\|_{L^p(x\geq C_0't)}=0
\end{equation}
and for any $s\in (0,1)$
\begin{equation}
\label{main-s}
\lim_{t\to \infty} \left\| D_x^s\left(u(x,t)\,\chi\left(\frac{x}{2C_0'}\right)\right)\right\|_{L^2}=0,
\end{equation}
with $\chi\in C^\infty(\R)$, $ 0\leq \chi(x) \leq 1\;$ for all $\;x\in\R$,
$\chi(x) \equiv 0\;$ if $\;x\leq 1$, $\chi(x) \equiv 1\;$ if $\;x\geq 2$ and $\;\chi'\geq 0$.

\item 
If, in addition, one assumes that the global  solution  $u=u(x,t)$ satisfies
 $$u\in C(\mathbb R:H^{3/2}(\mathbb R))\cap L^{\infty}(\mathbb R:H^{3/2}(\mathbb R)),$$ then
\begin{equation}
\label{main-1}
\lim_{t\to \infty} \| \partial_xu(t)\|_{L^2(x\geq C_0't)}=0,
\end{equation}
and \eqref{main-s} holds for $s\in (0,3/2)$. This result extends to global  solutions   $u\in C(\mathbb R:H^{k/2}(\mathbb R))$, $k\in\mathbb N$ with $k>3$.

\end{enumerate}
\end{remark}

The next result studies the decay of the $L^2$-norm of the solution in the far left region:

\begin{theorem}
\label{to-the-left}
For any constant $C_1>0$ and any $\eta>0$ the global in time solution 
\[
u\in C(\mathbb R:H^1(\mathbb R))\cap L^{\infty}(\mathbb R:H^1(\mathbb R))
\]
 of IVP \eqref{BO} satisfies
\begin{equation}
\label{main2}
\lim_{t\to \infty} \| u(t)\|_{L^2(x\leq -C_1t \log^{1+\eta}t)}=0.
\end{equation}
\end{theorem}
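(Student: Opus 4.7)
\medskip
\noindent\textbf{Proof proposal.} The plan is to adapt the moving-weight virial argument underlying Theorem \ref{to-the-right} to the far-left region, using the advance rate $\lambda(t):=C_1 t \log^{1+\eta}(t)$, which is tuned precisely so that $1/\lambda$ is integrable at infinity when $\eta>0$. Fix a smooth non-decreasing cutoff $\chi\colon\R\to[0,1]$ with $\chi\equiv 0$ on $(-\infty,0]$ and $\chi\equiv 1$ on $[1,\infty)$, and set
\[
\psi(x,t):=\chi\!\left(\frac{2(x+\lambda(t))}{\lambda(t)}\right),\qquad I(t):=\int_{\R} u^2(x,t)\,\psi(x,t)\,dx,
\]
so that $\psi$ vanishes on $\{x\leq -\lambda(t)\}$ and equals $1$ on $\{x\geq -\lambda(t)/2\}$. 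By conservation of the $L^2$-mass, showing $I(t)\to \|u_0\|^2_{L^2}$ as $t\to\infty$ is equivalent to the conclusion of the theorem.

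Differentiating and applying Lemma \ref{virial-BO}---together with the identity $\mathcal{H}\partial_x=D$, the skew-adjointness of $\mathcal{H}$, and the splitting $\mathcal{H}(u_x\psi)=\psi\mathcal{H}u_x+[\mathcal{H},\psi]u_x$---leads to
\[
I'(t)\;=\;\int u^2\,\partial_t\psi\,dx\;+\;\int u_x[\mathcal{H},\psi]u_x\,dx\;+\;2\int \mathcal{H}(u\,\psi_x)\,u_x\,dx\;+\;\tfrac{2}{3}\int \psi_x\,u^3\,dx.
\]
A direct computation yields $\partial_t\psi=(-x/\lambda(t))\,\lambda'(t)\,\partial_x\psi\geq \tfrac12\lambda'(t)\,\partial_x\psi\geq 0$ on $\supp(\partial_x\psi)$, so the first term is nonnegative. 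For the remaining three terms I would use Calder\'on's commutator estimate (which, via the boundedness of $[D,\psi]$, produces $\|[\mathcal{H},\psi]u_x\|_{L^2}\leq C\|\psi_x\|_{L^\infty}\|u\|_{L^2}$), the $L^2$-boundedness of $\mathcal{H}$, and the Sobolev embedding $H^1(\R)\hookrightarrow L^\infty$; combined with $\|\psi_x\|_{L^\infty}\leq C/\lambda(t)$ this yields
\[
I'(t)\;\geq\;-\frac{C(\|u_0\|_{H^1})}{\lambda(t)}.
\]
Since $\int_{t_0}^{\infty} ds/\lambda(s)<\infty$ by the choice of $\lambda$ and $\eta$, the function $t\mapsto I(t)+C\int_{t_0}^{t} ds/\lambda(s)$ is non-decreasing and bounded above by $\|u_0\|^2_{L^2}$ plus a finite constant, so $I(t)\to L$ for some $L\in[0,\|u_0\|^2_{L^2}]$.

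The \emph{main obstacle} is the identification $L=\|u_0\|^2_{L^2}$. For this I would use the auxiliary virial identity
\[
\tfrac{d}{dt}\int x\,u^2\,dx\;=\;-2\,\|D^{1/2}u\|^2_{L^2}\;+\;\tfrac{2}{3}\int u^3\,dx,
\]
whose right-hand side is uniformly bounded in $t$ by conservation of the energy $I_3$ in \eqref{claw} and Sobolev embedding. For initial data additionally satisfying $\int |x|\,u_0^2\,dx<\infty$, this yields $\big|\int x u^2(x,t)\,dx\big|\leq C(1+t)$; if a mass $\delta:=\|u_0\|^2_{L^2}-L>0$ remained trapped in $\{x\leq -\lambda(t)/2\}$ as $t\to\infty$, then one would have
\[
\int x u^2\,dx\;\leq\;-\tfrac{1}{4}\lambda(t)\,\delta+C(1+t),
\]
forcing $\delta\lesssim (1+t)/\lambda(t)=O(1/\log^{1+\eta}t)\to 0$, a contradiction. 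The extension to general $u_0\in H^1$ should follow by approximating $u_0$ by compactly supported data $u_0^n$ in $H^1$, using continuous dependence of BO in $L^2$ on compact time intervals (\cite{I-Ke}), together with the uniform-in-$n$ convergence rate $|I^n(t)-L^n|\leq C/\log^{\eta}(t)$ given by the almost-monotonicity. The technical care needed to interchange the $t\to\infty$ and $n\to\infty$ limits for the BO flow in $L^\infty_t H^1$ is the most delicate point of the argument.
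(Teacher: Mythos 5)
Your virial computation and the almost-monotonicity $I'(t)\geq -C(\|u\|_{L^\infty_tH^1})/\lambda(t)$ are sound, and they rest on exactly the mechanism the paper uses: in a left-moving frame the Kato smoothing term $-\int (D_x^{1/2}u)^2\partial_x\varphi\,dx$ has the unfavorable sign, so it must be absorbed by the integrability of $1/\lambda(t)$, which is what forces the speed $t\log^{1+\eta}t$. The problem is the last step, the identification $L=\|u_0\|_{L^2}^2$. First, almost-monotonicity gives a rate only on one side: from $I(t_2)\geq I(t_1)-C\int_{t_1}^{\infty}ds/\lambda(s)$ one gets $I(t)\leq L+O(\log^{-\eta}t)$, which is vacuous since $I(t)\leq\|u_0\|_{L^2}^2$ anyway; there is \emph{no} rate for the approach of $I(t)$ to $L$ from below, so the claimed bound $|I^n(t)-L^n|\leq C\log^{-\eta}t$ does not hold in the direction you need. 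Consequently the time at which $I^n(t)\geq\|u_0^n\|_{L^2}^2-\delta$ is not controlled uniformly in $n$, and the interchange of limits cannot be closed by continuous dependence, which for BO is only available on compact time intervals (there is no uniform-in-time $L^2$ stability estimate $\sup_t\|u(t)-u^n(t)\|_{L^2}\lesssim\|u_0-u_0^n\|$). The weighted-virial step also silently uses persistence of $\int|x|u^2\,dx$ under the BO flow, a nontrivial extra ingredient that would need justification.

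The missing idea, which makes the entire limit-identification detour unnecessary, is a translation parameter in the weight. Work with the complementary, left-localized functional
\begin{equation*}
J_{c_1}(t)=\int u^2(x,t)\,\beta\!\left(\frac{x+c_1}{c_2\,t\log^{1+\eta}t}\right)dx,
\end{equation*}
with $\beta$ decreasing, $\beta\equiv 1$ on $(-\infty,-2]$ and $\beta\equiv 0$ on $[-1,\infty)$. Your same estimates show $J_{c_1}'(t)\leq K_0/(t\log^{1+\eta}t)$ with $K_0$ independent of $c_1$. Given $\epsilon>0$, first fix $t_0$ so that $\int_{t_0}^\infty K_0\,dt/(t\log^{1+\eta}t)\leq\epsilon$, and \emph{then} choose $c_1$ so large that $J_{c_1}(t_0)<\epsilon$; this is always possible because $u(t_0)\in L^2(\R)$ and the weight is supported in $x\leq -c_1-c_2t_0\log^{1+\eta}t_0$. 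Almost-monotonicity then gives $J_{c_1}(t)\leq 2\epsilon$ for all $t>t_0$, hence $\limsup_{t\to\infty}\int_{x\leq -3c_2t\log^{1+\eta}t}u^2\,dx\leq 2\epsilon$, and the theorem follows for all $u_0\in H^1$ with no weighted hypotheses and no density argument. This is precisely how the paper concludes.
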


\begin{remark} \hskip10pt

\begin{enumerate}

\item To our knowledge the result in Theorem \ref{to-the-left} is totally new. From its proof below it will be clear that it applies with minor modifications to solutions of the generalized KdV equation,
$$
\partial_tu +\partial^3_xu + u^k\partial_xu = 0, \hskip5pt x,t\in\R,\;k=1,2,\dots,
$$
 to solutions of the generalized BO equation (see \eqref{k-gBO}), and solutions to others one dimensional dispersive models.

\item 

The statements  \eqref{main-p}, \eqref{main-s} and \eqref{main-1} in Remark \ref{abc} apply to the result in Theorem \ref{to-the-left} with the appropriate modifications.

\item 
Collecting the information in Theorems \ref{L2BO}, \ref{to-the-right} and \ref{to-the-left} one can deduce several estimates. In particular, one has: 
there exists $\,C_0=C_0(\|u_0\|_{H^1})>0$ and an increasing sequence of times $(t_n)_{n=1}^{\infty}$ with $t_n\uparrow \infty$ as $n\to \infty$ such that for any 
constants $c>0,\,\gamma>0$,
\begin{equation}
\label{123}
\liminf_{n\to \infty} \,\int_{\Omega(t_n)} |u(x,t_n)|^2\,dx=\|u_0\|_2^2,
\end{equation}
with
\[
\Omega(t): =\{x\in\R : -c\,t\,\log^{1+\gamma}t<x<-c\,t^{\frac23^{-}}\hskip5pt \text{or}\hskip7pt c\,t^{\frac23^{-}}\!\!\!<x<C_0\,t\}.
\]
\end{enumerate}
\end{remark}

Finally, we shall consider the possible extensions of the above results to solutions of the IVP associated to the $k$-generalized BO (k-gBO) equation
\begin{equation}\label{k-gBO}
\begin{cases}
\partial_tu - \mathcal{H}\partial^2_xu + u^k\partial_xu = 0, \hskip5pt x,t\in\R,\;k=2,3,...\\
u(x,0)=u_0(x).
\end{cases}
\end{equation}

In this case, the equations in \eqref{k-gBO} are not completely integrable and satisfy (in general) only three conservation laws : $I_1(u)$, $I_2(u)$ in \eqref{claw} and
$$
I_3(u)= \int_{\mathbb{R}}\left(\frac{1}{2}|D^{1/2}u|^2 + \frac{u^{k+1}}{(k+1)(k+2)}  \right) \,dx.
$$

The scaling argument, see Remark \ref{abc}, says : if $u(x,t)$ is a solution of the k-gBO equation in \eqref{k-gBO}, then for any $\lambda>0$, $\,u_{\lambda}(x,t)=\lambda^{1/k} u(\lambda x,\lambda^2 t)$ is also a solution. This suggests  that the critical Sobolev space for the well-posedness  should be $H^{s_k}(\R)$ with $s_k=1/2-1/k$.

The results considered here are concerned with global solutions of the \eqref{k-gBO}. Thus, for the cases $k\geq 2$ these are only known under appropriate smallness assumptions on the data. More precisely : if $k=2$ local well-posedness in $H^{1/2}(\R)$ was established in \cite{KeTa}. This local result extends globally in time if one assumes  that the $L^2$-norm of the initial data $u_0$ is small enough (the blow up result in \cite{MaPi} shows that this restriction is necessary). 

In \cite{Ve}, local well posedness was proven in $H^s(\R)$ for  $s>1/3$ if $k=3$, and for $s\geq s_k=1/2-1/k$ if $k\geq 4$. These local results extend to global ones under a smallness assumption of the $H^{1/2}$-norm of the initial data $u_0$ (see \cite{FaLP}). In all these global results one only has an \it{a priori }\rm  bound of the $H^{1/2}$-norm of the solution.

Our argument of proof of Theorem \ref{to-the-right} depends on a global bound of the $L^{\infty}$-norm of the solution. Hence, the proof of  Theorem \ref{to-the-right} provided below does not extend to these small global solutions of the IVP \eqref{k-gBO}.

The approach to obtain Theorem \ref{to-the-left} only requires a global bound of the $L^{k+2}$-norm of the solution, which follows from that of the $H^{1/2}$-norm. Hence, the result in Theorem \ref{to-the-left} expands to all small global solutions of the IVP \eqref{k-gBO} commented above.

\begin{remark}
In the cases when $k\geq 1$ is {\bf odd}, the arguments utilized to prove  Theorem \ref{L2BO}, Corollary \ref{corL2BO} and Theorem \ref{H1BO} apply to get the results in \eqref{EQ1}, \eqref{notcentered} and \eqref{EQ3} with the term  $u^2$ in the integrand substituted by $u^{k+1}$. However, in this case $k\geq 2$ and small data (in a weighted space),  stronger asymptotic results were accomplished in \cite{HaNa}.
\end{remark}

\medskip

The rest of this paper is ordered as follows: Section \ref{Sect:2} contains the statements of some general estimates to be used in the proofs of the main results. Theorem \ref{L2BO} and Corollary \ref{corL2BO} will be proven in Section \ref{Sect:3}. Section \ref{Sect:4} involves the proof of Theorem \ref{H1BO} and Section \ref{Sect:5} those of Theorem \ref{to-the-right} and Theorem \ref{to-the-left}. Appendix \ref{Sect:6} consists of the proof a commutator estimate stated in Section \ref{Sect:2} and used in Sections \ref{Sect:3}-\ref{Sect:5}.

\medskip


\section{Preliminaries}\label{Sect:2}

We present a series of estimates we will employ in the proof of our results. 

\begin{lemma}\label{CC}
For any $k,m \in \mathbb N\cup\{0\},\,k+m\geq 1$, and any $p\in(1,\infty)$
 \begin{equation} \label{comm}
 \| \partial_x^k \big[\mathcal H;a\big]\partial_x^mf\|_p\leq c_{p,k,m}\|\partial_x^{k+m}a\|_{\infty} \|f\|_p.
 \end{equation}
 \end{lemma}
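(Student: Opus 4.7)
The plan is to prove the bound by induction on $N = k + m$, with Calderón's first commutator theorem as the base case.

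First I would establish the algebraic identity
\[
\partial_x^k[\mathcal{H}, a]\partial_x^m = \sum_{j=0}^{k}\binom{k}{j}\,[\mathcal{H}, a^{(j)}]\,\partial_x^{k+m-j},
\]
by induction on $k$ from the single-step identity $\partial_x[\mathcal{H}, a] = [\mathcal{H}, a]\partial_x + [\mathcal{H}, a']$, which is immediate from $\mathcal{H}\partial_x = \partial_x\mathcal{H}$ and the Leibniz rule. Since $\|(a^{(j)})^{(k+m-j)}\|_\infty = \|a^{(k+m)}\|_\infty$ for every $j$, this reduces the lemma to proving
\[
\|[\mathcal{H}, b]\,\partial_x^M f\|_p \;\le\; c_{p,M}\,\|b^{(M)}\|_\infty\,\|f\|_p, \qquad M \ge 1.
\]
For $M = 1$, one integration by parts in the integral representation of $\mathcal{H}$ yields
\[
[\mathcal{H}, b]\partial_x f(x) = -\mathcal{H}(b'f)(x) - \frac{1}{\pi}\,\mathrm{p.v.}\!\int_{\mathbb{R}}\frac{b(y)-b(x)}{(x-y)^2}f(y)\,dy,
\]
the first piece being controlled by the $L^p$-continuity of $\mathcal{H}$ and the second being exactly the Calderón first commutator, enjoying the same bound.

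For $M \ge 2$ I would use the Taylor expansion
\[
b(y) = \sum_{j=0}^{M-1}\frac{b^{(j)}(x)}{j!}(y-x)^j + R_M(x,y), \qquad |R_M(x,y)| \le \frac{\|b^{(M)}\|_\infty}{M!}|y-x|^M.
\]
Substituted into the integral representation of $[\mathcal{H},b]\partial_x^M f$, each polynomial term with $j\ge 1$ produces an integral proportional to $b^{(j)}(x)\int (y-x)^{j-1}\partial_y^M f(y)\,dy$, which vanishes by the moment identity obtained from $j-1$ further integrations by parts (since $j-1 < M$). Only the $R_M$-contribution survives, and $M$ integrations by parts, combined with Leibniz and the formula $\partial_y^\ell R_M(x,y) = R_{M-\ell}[b^{(\ell)}](x,y)$, reorganise it as a sum of three families: (i) $\mathcal{H}(b^{(M)}f)$, bounded by the $L^p$-continuity of $\mathcal{H}$; (ii) lower-order commutators $[\mathcal{H}, b^{(j)}]\partial_x^{M-j}f$ with $1\le j\le M-1$, bounded by the inductive hypothesis applied to $b^{(j)}$; and (iii) a principal-value integral with Calderón--Zygmund kernel $R_M(x,y)/(x-y)^{M+1}$, whose size and gradient are controlled respectively by $\|b^{(M)}\|_\infty/|x-y|$ and $\|b^{(M)}\|_\infty/|x-y|^2$, hence $L^p$-bounded by standard Calderón--Zygmund theory (or equivalently by the Coifman--McIntosh--Meyer higher-order commutator estimate).

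The principal technical obstacle is that a brute $M$-fold integration by parts in the kernel $(b(y)-b(x))/(x-y)$ would produce a symbol involving $b^{(M+1)}$ rather than $b^{(M)}$; the improvement by one derivative is recovered precisely by combining the moment cancellations of the Taylor polynomial part with the $L^p$-continuity of $\mathcal{H}$ absorbing one order of smoothing. Organising these cancellations cleanly is the delicate algebraic content of the argument.
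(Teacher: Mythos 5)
The paper itself gives no proof of Lemma \ref{CC}: the case $k+m=1$ is attributed to Calder\'on \cite{Ca}, the general case to \cite{BCo}, and a different proof to \cite{DaGaPo}. So your attempt can only be judged on its own terms. Your algebraic preparation is correct and cleanly organized: the identity $\partial_x^k[\mathcal H,a]\partial_x^m=\sum_{j=0}^k\binom{k}{j}[\mathcal H,a^{(j)}]\partial_x^{k+m-j}$ is right (you should also record the degenerate term $j=k$, $m=0$, where $M=0$ and the trivial bound $\|[\mathcal H,b]f\|_p\leq c_p\|b\|_\infty\|f\|_p$ applies); the $M=1$ integration by parts correctly splits the operator into $\mathcal H(b'f)$ plus Calder\'on's first commutator; and the Taylor-expansion-with-moment-cancellation computation for $M\geq 2$ legitimately isolates a family of operators with kernels $R_n[b^{(M-n)}](x,y)/(x-y)^{n+1}$, $0\leq n\leq M$.

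The gap is at the analytic core. For the surviving kernel $R_M(x,y)/(x-y)^{M+1}$ you claim $L^p$ boundedness ``by standard Calder\'on--Zygmund theory'' from the size and gradient estimates $\|b^{(M)}\|_\infty/|x-y|$ and $\|b^{(M)}\|_\infty/|x-y|^2$. That inference is false: Calder\'on--Zygmund theory transfers an already known $L^2$ bound to $L^p$; it does not produce the $L^2$ bound from kernel estimates alone (a $T(1)$-type cancellation hypothesis must be verified, and doing so is the entire content of Calder\'on's theorem when $M=1$ and of the higher-order commutator theorems when $M\geq 2$). Your fallback to the Coifman--McIntosh--Meyer commutators does not quite apply either, since those have kernels $(A(x)-A(y))^n/(x-y)^{n+1}$, a power of a first difference, whereas yours involve the full Taylor remainder of $b$; such ``Taylor-remainder'' commutators are indeed bounded, but that is a separate, equally non-elementary theorem of the same family as the result of \cite{BCo} that the paper simply cites. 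So at its crux your argument either fails or quietly reduces the lemma to the theorem it was meant to prove. A genuinely different and shorter route, used in \cite{DaGaPo}, works on the Fourier side: the symbol of $[\mathcal H;a]$ carries the factor $\operatorname{sgn}(\xi)-\operatorname{sgn}(\eta)$, which vanishes unless the output and input frequencies have opposite signs, forcing $|\xi|,|\eta|\leq|\xi-\eta|$; hence $|\xi|^k|\eta|^m\leq|\xi-\eta|^{k+m}$ and all $k+m$ derivatives can be transferred onto $a$, after which a bilinear multiplier estimate closes the argument.
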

 The case $k+m=1$ corresponds to the first Calder\'on commutator estimate \cite{Ca}. The general case of \eqref{comm} was established in \cite{BCo}. For a different proof see \cite{DaGaPo}.

The next estimate is an inequality of Gagliardo-Nirenberg type whose proof can be found in  \cite{BL}.

\begin{lemma}\label{GNSinequality} There exists $C>0$ such that for any $f\in H^{1/2}(\R)$ 
 \begin{equation}
 \|f\|_{L^3} \leq C\|f\|_{L^2}^{\frac{2}{3}}\|D^{1/2}f\|_{L^2}^{\frac{1}{3}}. 
 \end{equation}
\end{lemma}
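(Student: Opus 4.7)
The plan is to decompose the estimate into two classical ingredients: a homogeneous Sobolev embedding and a Fourier-side interpolation, combining them at the end. Roughly, I would route
\[
\|f\|_{L^3} \;\lesssim\; \|D^{1/6}f\|_{L^2} \;\lesssim\; \|f\|_{L^2}^{2/3}\,\|D^{1/2}f\|_{L^2}^{1/3},
\]
with the first inequality given by the endpoint $\dot H^{1/6}(\R)\hookrightarrow L^3(\R)$ of Sobolev embedding on the line (note the scaling check: $1/3=1/2-1/6$), and the second by an interpolation inequality for the homogeneous $\dot H^s$ norms at exponents $0$, $1/6$ and $1/2$ (since $1/6 = (2/3)\cdot 0 + (1/3)\cdot 1/2$).

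For the interpolation step, I would argue directly via Plancherel and Hölder. Writing
\[
\|D^{1/6}f\|_{L^2}^2=\int_{\R}|\xi|^{1/3}|\widehat{f}(\xi)|^2\,d\xi
=\int_{\R}|\widehat{f}(\xi)|^{4/3}\,\bigl(|\xi|^{1/2}|\widehat{f}(\xi)|\bigr)^{2/3}\,d\xi,
\]
Hölder's inequality with conjugate exponents $3/2$ and $3$ gives
\[
\|D^{1/6}f\|_{L^2}^2\le\Bigl(\int|\widehat{f}|^2\,d\xi\Bigr)^{2/3}\Bigl(\int|\xi|\,|\widehat{f}|^2\,d\xi\Bigr)^{1/3}=\|f\|_{L^2}^{4/3}\,\|D^{1/2}f\|_{L^2}^{2/3},
\]
which is the desired interpolation after taking square roots.

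For the Sobolev embedding step, the most efficient route is the Hardy–Littlewood–Sobolev inequality: since $D^{-1/6}$ is (up to a constant) convolution with the Riesz kernel $|x|^{-1+1/6}=|x|^{-5/6}$ on $\R$, HLS with the relation $1/3+1=1/2+(1-1/6)$ yields $\|D^{-1/6}g\|_{L^3}\lesssim \|g\|_{L^2}$, and setting $g=D^{1/6}f$ produces the claim. Alternatively, one can perform a Littlewood–Paley decomposition $f=\sum_{j\in\Z}P_j f$ and combine Bernstein's inequality $\|P_j f\|_{L^3}\lesssim 2^{j/6}\|P_j f\|_{L^2}$ with the square-function characterization of $L^3$ on $\R$.

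The main (mild) obstacle is really the Sobolev embedding step, since the pure Fourier interpolation is one line; HLS in the form used above requires taking a little care with constants, and the Littlewood–Paley alternative requires invoking the $L^p$ boundedness of the Littlewood–Paley square function for $p=3$. Either path is standard, and assembling the two inequalities gives Lemma \ref{GNSinequality}.
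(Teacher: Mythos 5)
Your argument is correct, but it is worth noting that the paper does not prove this lemma at all: it simply cites Bergh--L\"oftr\"om \cite{BL}, where the inequality is obtained through the machinery of real/complex interpolation of Sobolev--Besov spaces. Your route is a self-contained and more elementary alternative: the splitting $\|f\|_{L^3}\lesssim \|D^{1/6}f\|_{L^2}\lesssim \|f\|_{L^2}^{2/3}\|D^{1/2}f\|_{L^2}^{1/3}$ is exactly right, the Plancherel--H\"older computation for the second inequality is complete as written, and the first inequality is the (non-endpoint, since $1/6<1/2$) homogeneous Sobolev embedding $\dot H^{1/6}(\mathbb{R})\hookrightarrow L^3(\mathbb{R})$, correctly reduced to Hardy--Littlewood--Sobolev for the Riesz potential with kernel $|x|^{-5/6}$; the numerology $1/3=1/2-1/6$ checks out. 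Two small remarks: calling this embedding an ``endpoint'' is a misnomer (the genuine endpoint $\dot H^{1/2}(\mathbb{R})\hookrightarrow L^\infty(\mathbb{R})$ fails, which is precisely why one cannot instead interpolate $\|f\|_{L^3}\le\|f\|_{L^2}^{2/3}\|f\|_{L^\infty}^{1/3}$); and if you take the Littlewood--Paley alternative, the summation over frequencies needs the square-function characterization together with Minkowski's inequality (for $p=3\ge 2$ one has $\|(\sum_j|P_jf|^2)^{1/2}\|_{L^3}\le(\sum_j\|P_jf\|_{L^3}^2)^{1/2}$), which you implicitly acknowledge. What your approach buys is transparency and explicit constants from classical one-variable inequalities; what the citation to \cite{BL} buys is brevity and a statement valid in the general Gagliardo--Nirenberg scale without case-by-case verification.
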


Along the proof of our results we shall use the following general version of the Leibniz rule for fractional derivatives:

\begin{lemma}\label{leibnizhomog.}  
Let $r\in [1,\infty]$  and $ p_1,p_2, q_1, q_2\in (1,\infty]$ with 
\begin{equation}
\label{LR4}
\frac 1r=\frac1{p_1}+\frac1{q_1}=\frac1{p_2}+\frac1{q_2}.
\end{equation}
 Given $s>0$ there exists $c=c(n,s,r,p_1,p_2,q_1,q_2)>0$ such that for all $f, g\in \mathcal S(\mathbb R^n)$
one has
\begin{equation}
\label{LR2}
\| D^s(fg)\|_r\leq c\left(\|f\|_{p_1}\| D^sg\|_{q_1}+\|g\|_{p_2}\|D^sf\|_{q_2}\right).
\end{equation}
\end{lemma}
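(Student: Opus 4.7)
The strategy is to combine Bony's paraproduct decomposition with Littlewood--Paley theory and vector-valued maximal inequalities, in the spirit of the original Kato--Ponce argument and its extension by Grafakos--Oh to the full range of exponents. Let $\{\Delta_j\}_{j\in\Z}$ denote the homogeneous Littlewood--Paley projections onto the dyadic annulus $|\xi|\sim 2^j$, set $S_j=\sum_{k\le j-3}\Delta_k$, and write Bony's decomposition
\[
fg \;=\; T_fg \,+\, T_gf \,+\, R(f,g),
\]
where $T_fg=\sum_j S_{j-3}f\cdot \Delta_j g$, $T_gf=\sum_j S_{j-3}g\cdot \Delta_j f$, and $R(f,g)=\sum_{|j-k|\le 2}\Delta_j f\cdot \Delta_k g$.

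For the paraproduct $T_fg$, the Fourier support of the $j$-th summand lies in the annulus $|\xi|\sim 2^j$, so $D^s(S_{j-3}f\cdot \Delta_j g)$ acts like $2^{js}(S_{j-3}f\cdot \Delta_j g)$ modulo frequency-localized multipliers of bounded symbol. Combining the Littlewood--Paley square-function characterization of $L^r$ with the pointwise bound $|S_{j-3}f|\lesssim Mf$ (Hardy--Littlewood maximal function), H\"older's inequality, and the Fefferman--Stein vector-valued maximal inequality gives
\[
\|D^s T_fg\|_r \;\lesssim\; \|Mf\|_{p_1}\,\Big\|\Big(\sum_j 2^{2js}|\Delta_j g|^2\Big)^{1/2}\Big\|_{q_1} \;\lesssim\; \|f\|_{p_1}\|D^sg\|_{q_1},
\]
and $T_gf$ is estimated symmetrically with the exponents $(p_2,q_2)$.

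The remainder $R(f,g)$ is more delicate since its summands are ball-localized rather than annulus-localized. Writing $\Delta_k R(f,g)=\sum_{j\ge k-3}\Delta_k\bigl(\Delta_j f\cdot \widetilde\Delta_j g\bigr)$ with $\widetilde\Delta_j=\Delta_{j-1}+\Delta_j+\Delta_{j+1}$, one estimates
\[
2^{ks}\|\Delta_k R(f,g)\|_r \;\lesssim\; \sum_{j\ge k-3} 2^{(k-j)s}\cdot 2^{js}\|\Delta_j f\cdot \widetilde\Delta_j g\|_r,
\]
and observes that the hypothesis $s>0$ renders the geometric factor $2^{(k-j)s}$ summable over $k\le j+3$. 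Placing the $2^{js}$ on whichever factor one wishes and applying the square-function characterization together with H\"older in the identity $\tfrac1r=\tfrac1{p_i}+\tfrac1{q_i}$ yields a bound by $\|f\|_{p_1}\|D^sg\|_{q_1}+\|g\|_{p_2}\|D^sf\|_{q_2}$, as required.

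The main obstacle is the endpoint case where some $p_i$ or $q_i$ equals $\infty$ (or, dually, where $r=1$), because Fefferman--Stein's vector-valued maximal inequality fails at $L^{\infty}$ and the square-function characterization degenerates at $L^{1}$. In that regime one must either substitute the square-function step by an appropriate $BMO$/$H^1$ duality estimate, or apply the Coifman--Meyer multiplier theorem directly to the bilinear symbol attached to $D^s(fg)-D^sf\cdot g-f\cdot D^sg$. This is the technical heart of the extension due to Grafakos--Oh (and, independently, Muscalu--Schlag), which I would cite rather than reprove here.
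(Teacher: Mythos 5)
The paper does not prove this lemma at all: immediately after the statement it simply refers the reader to Grafakos--Oh \cite{GO} for the general case, to \cite{BoLi} and \cite{GMN} for the $L^\infty$ endpoint, and to \cite{KaPo}, \cite{KPV93} for earlier versions. Your sketch is the standard (and correct) paraproduct route to this estimate: Bony decomposition, square-function plus Fefferman--Stein bounds for the two paraproducts, and the $s>0$ geometric summation for the diagonal remainder. You also correctly identify the genuine obstruction, namely that this argument only closes for $1<r<\infty$ and finite $p_i,q_i$, since the Littlewood--Paley characterization degenerates at $L^1$ and the vector-valued maximal inequality fails at $L^\infty$, and for those endpoints you defer to the same reference the paper cites. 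So in substance you have done strictly more than the paper, which offers no argument; the only quibble is attribution --- the independent proof of the $L^\infty$ endpoint case is due to Bourgain--Li \cite{BoLi} (see also \cite{GMN}), not Muscalu--Schlag. Since the lemma is quoted from the literature, citing \cite{GO} for the endpoint cases, as both you and the authors do, is entirely appropriate.
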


For the proof of Lemma \ref{leibnizhomog.} we refer to \cite{GO}. The case $r=p_1=p_2=q_1=q_2=\infty$ was established in \cite{BoLi}, see also \cite{GMN}. For earlier versions of this result see \cite{KaPo} and \cite{KPV93}.

 Finally, we consider a commutator estimate whose proof will be given in the appendix.
   \begin{lemma} \label{comm2}
   
   Let $a\in C^2(\mathbb R)$ with $a',\,a''\in L^{\infty}(\mathbb R)$. There exists $c>0$ such that for all $f\in L^2(\mathbb R)$  
  \begin{equation}
  \label{comm-d-1/2}
 \| D^{1/2} \big [D^{1/2}; a\big] f\|_{L^2}\leq c\| \widehat{a'}\|_{L^1}\|f\|_{L^2}\\
 \leq c\| a'\|_{L^2}^{1/2} \|a''\|_{L^2}^{1/2}\|f\|_{L^2}.
\end{equation}
\end{lemma}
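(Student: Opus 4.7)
First I would pass to Fourier variables to reduce the estimate to controlling a bilinear multiplier by a convolution with $\widehat{a'}$. By Plancherel, the Fourier transform of $D^{1/2}[D^{1/2};a]f$ at $\xi$ equals $(2\pi)^{-1}\int_\R m(\xi,\eta)\,\hat a(\xi-\eta)\,\hat f(\eta)\,d\eta$, where
\[
m(\xi,\eta):=|\xi|^{1/2}\bigl(|\xi|^{1/2}-|\eta|^{1/2}\bigr).
\]
The core of the argument will be the pointwise multiplier estimate
\[
|m(\xi,\eta)| \leq C\,|\xi-\eta| \quad \text{for all } \xi,\eta \in \R.
\]
Once this is in hand, the identity $(\xi-\eta)\hat a(\xi-\eta)=-i\,\widehat{a'}(\xi-\eta)$ provides the pointwise majorization
\[
\bigl|\widehat{D^{1/2}[D^{1/2};a]f}(\xi)\bigr|\leq C\int_\R |\widehat{a'}(\xi-\eta)|\,|\hat f(\eta)|\,d\eta,
\]
and Young's convolution inequality in $L^2_\xi$ together with Plancherel will immediately yield $\|D^{1/2}[D^{1/2};a]f\|_{L^2}\leq c\,\|\widehat{a'}\|_{L^1}\|f\|_{L^2}$, which is the first claim.

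I expect the main obstacle to be verifying the multiplier estimate, since $m$ is not a convolution symbol. When $\xi,\eta$ share the same sign, the identity $\sqrt{a}-\sqrt{b}=(a-b)/(\sqrt{a}+\sqrt{b})$ applied with $a=|\xi|,\,b=|\eta|$ factorizes
\[
m(\xi,\eta)=(\xi-\eta)\,\sgn(\xi)\,\frac{|\xi|^{1/2}}{|\xi|^{1/2}+|\eta|^{1/2}},
\]
whose last factor lies in $[0,1]$, so $|m(\xi,\eta)|\leq |\xi-\eta|$. In the opposite-sign case I would use $|\xi-\eta|=|\xi|+|\eta|$ together with the direct bound $|m(\xi,\eta)|\leq |\xi|+|\xi|^{1/2}|\eta|^{1/2}$ and AM--GM to obtain $|m(\xi,\eta)|\leq \tfrac{3}{2}|\xi-\eta|$. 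This is the one place requiring a slightly nonobvious manipulation, but no deep tools are needed; the rest of the reduction is automatic.

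Finally, I would deduce the second inequality by a standard Cauchy--Schwarz split. Choosing a threshold $R>0$ and applying Plancherel gives
\[
\int_{|\xi|\leq R}|\widehat{a'}(\xi)|\,d\xi \leq (2R)^{1/2}\|a'\|_{L^2},\qquad
\int_{|\xi|> R}|\widehat{a'}(\xi)|\,d\xi \leq \Bigl(\int_{|\xi|>R}\frac{d\xi}{\xi^2}\Bigr)^{\!1/2}\|\widehat{a''}\|_{L^2}\lesssim R^{-1/2}\|a''\|_{L^2}.
\]
Optimizing with $R=\|a''\|_{L^2}/\|a'\|_{L^2}$ balances the two contributions and delivers $\|\widehat{a'}\|_{L^1}\leq c\,\|a'\|_{L^2}^{1/2}\|a''\|_{L^2}^{1/2}$, which, chained to the first inequality, completes the argument.
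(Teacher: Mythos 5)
Your proposal is correct and follows essentially the same route as the paper: pass to the Fourier side, establish the pointwise symbol bound $|\xi|^{1/2}\bigl||\xi|^{1/2}-|\eta|^{1/2}\bigr|\le C|\xi-\eta|$, convert $(\xi-\eta)\hat a(\xi-\eta)$ into $\widehat{a'}$, apply Young's inequality, and then bound $\|\widehat{a'}\|_{L^1}$ by $\|a'\|_{L^2}^{1/2}\|a''\|_{L^2}^{1/2}$ via a frequency split at an optimized threshold $R$. The only (harmless) difference is that in the same-sign case you verify the symbol bound through the conjugate identity $\sqrt a-\sqrt b=(a-b)/(\sqrt a+\sqrt b)$, which is slightly cleaner than the paper's mean-value-theorem argument with its extra sub-case $0<\eta\le\xi/10$.
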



\section{Proof of Theorem \ref{L2BO} and Corollary \ref{corL2BO}}\label{Sect:3}

First we will introduce some notation and definitions.

Let $\phi$ be a smooth even and positive function such that 
\begin{equation}\label{weight}
\begin{cases}
\text{\;\;\rm i)}&\!\! \phi'(x)\leq 0, \;\text{for}\; x\ge 0, \\
\text{\;\rm ii)}&\!\! \phi(x)\equiv 1, \;\text{for}\; 0 \le x \le 1, \phi(x) = e^{-x}  \;\text{for}\; {\color{red} x\ge 2}, \\
&\text{and}\;\; e^{-x} \leq \phi(x) \leq 3e^{-x}  \;\text{for}\; x\ge 0,\\
\text{\rm iii)}&\!\! |\phi'(x)| \leq c\,\phi(x) \;\text{and}\; |\phi''(x)| \leq c\,\phi(x)\\
 &\;\text{for some positive constant} \;c.
\end{cases}
\end{equation}

Let $\psi(x) = \int_0^x \phi (s)ds$. In particular, $|\psi (x)| \leq 1 + 3\int_1^{\infty}e^{-t}dt < \infty$.   \\

Next, we consider a smooth cut-off function $\zeta :\mathbb{R} \rightarrow \mathbb{R}$ such that 
\begin{equation}
 \zeta \equiv 1 {\hskip5pt on \hskip5pt} [0,1], {\hskip10pt} 0 \leq  \zeta\leq 1 {\hskip10pt and \hskip10pt} \zeta \equiv 0 {\hskip5pt on \hskip5pt} (\infty,-1]\cup[2,\infty),
\end{equation}
and define $\zeta_n(x) := \zeta(x-n)$.
 
For the parameters $\delta, \sigma \in \mathbb{R}^+$,  we define 
\begin{equation*}
\phi_{\delta} = \delta\phi\left( \frac{x}{\delta}\right) {\hskip 10 pt \text{and} \hskip10pt}  \psi_{\sigma}(x) = \sigma\psi\left( \frac{x}{\sigma}\right).
\end{equation*}

\medskip

The proof of Theorem \ref{L2BO} will be deduced as a consequence of the  following lemmas, which we shall prove below.

\medskip

First, we start by considering some useful parameters involved in our argument of proof. 
 \begin{equation}\label{parameters}
 \rho(t) = \pm t^m, {\hskip5pt  \hskip5pt}  \mu_1(t) = \frac{t^b}{\log t} {\hskip10pt \hbox{and} \hskip10pt} \mu(t) = t^{(1-b)}\log^2t,
 \end{equation}
where $m$ and $b$ are positive constants satisfying the relations 
\begin{equation}\label{bmrelation}
  0 \leq m \leq 1 - \frac{b}{2}{\hskip10pt \hbox{and} \hskip10pt} 0< b \leq \min\left\{\frac{2}{3}, \frac{2}{2+q}\right\},\;\;\;\;\;\;q>0.
\end{equation}

Since
\begin{equation*}
  \frac{\mu'_1(t)}{\mu_1(t)} = \frac{b}{t} - \frac{1}{t\log t}{\hskip10pt \text{and} \hskip10pt} \frac{\mu'(t)}{\mu(t)} = \frac{(1-b)}{t} + \frac{2}{t\log t}
\end{equation*}
it readily follows that
    \begin{equation}\label{Relationderivatives}
        \frac{\mu'_1(t)}{\mu_1(t)} \sim \frac{\mu'(t)}{\mu(t)} = O\left(\frac1t\right), {\hskip10pt for \hskip10pt} t \gg 1
    \end{equation}
where $t \gg 1$ means the values of t such that $\mu_1'(t)$ is positive. In particular, $[10,+\infty) \subset \{t \gg 1\}$.

    
\medskip


For $u=u(x,t)$ a solution of the IVP \eqref{BO} we define the functional\\ 
\begin{equation}\label{NC-1}
     \mathcal{I}(t) := \frac{1}{\mu(t)}\int_{\mathbb{R}} u(x,t)\psi_{\sigma}\left( \frac{x}{\mu_1(t)} \right) \phi_{\delta }\left( \frac{x}{\mu_1^q(t)} \right)\,dx, 
        \end{equation}
for $q > 1$.

\begin{lemma}\label{Functionalbounded}
Let $u(\cdot,t) \in L^2(\mathbb{R})$, $t\gg 1$. The functional $\mathcal{I}(t)$  is well defined and bounded in time.
   
\begin{proof}
 The Cauchy-Schwarz inequality and the definition of the functions $\mu(t)$ and $\mu_1(t)$ imply that 
 \begin{equation}
  \begin{split}
   |\mathcal{I}(t)| &\leq \frac{1}{\mu(t)}\|u(t)\|_{L^2}\left\|\psi_{\sigma}\left( \frac{\cdot}{\mu_1(t)} \right)\right\|_{L^{\infty}}\left\|\phi_{\delta}\left( \frac{\cdot}{\mu_1^q(t)}\right)\right\|_{L^2} \\
   & = \frac{\mu_1^{q/2}(t)}{\mu(t)}\|u(t)\|_{L^2}\|\psi_{\sigma}\|_{L^{\infty}}\|\phi_{\delta}\|_{L^2}\\
   & \lesssim_{\sigma, \delta} \frac{1}{t^{(2-2b-bq)/2}}\frac{1}{\log^{(4+q)/2}(t)}\|u_0\|_{L^2}. \\
  \end{split}
 \end{equation}

Since $b$ satisfies the condition \eqref{bmrelation} we have that
\begin{equation*}
\sup_{t \gg 1}|\mathcal{I}(t)| < \infty.
\end{equation*}   
\end{proof}
\end{lemma}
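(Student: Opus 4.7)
The plan is to apply the Cauchy--Schwarz inequality to isolate the $L^2$-norm of $u$ from the two weight factors, and then to verify that the time-dependent prefactor $\mu_1(t)^{q/2}/\mu(t)$ is bounded (in fact decaying) under the admissibility conditions \eqref{bmrelation}. Since $u$ is controlled in $L^2$ by the conservation law $I_2(u)$, the only real work is to track the growth/decay of the two weights.

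First I would write
\[
|\mathcal{I}(t)| \leq \frac{1}{\mu(t)}\,\|u(t)\|_{L^2}\,\left\|\psi_{\sigma}\!\left(\tfrac{\cdot}{\mu_1(t)}\right)\right\|_{L^{\infty}}\left\|\phi_{\delta}\!\left(\tfrac{\cdot}{\mu_1^{q}(t)}\right)\right\|_{L^2}.
\]
The $L^{\infty}$-norm of $\psi_{\sigma}(\cdot/\mu_1(t))$ is just $\sigma\|\psi\|_{L^{\infty}}$, a constant independent of $t$, by the uniform bound on $\psi$ noted right after its definition. For the second factor a change of variables gives
\[
\left\|\phi_{\delta}\!\left(\tfrac{\cdot}{\mu_1^{q}(t)}\right)\right\|_{L^2} = \mu_1(t)^{q/2}\,\|\phi_{\delta}\|_{L^2},
\]
and $\|\phi_{\delta}\|_{L^2}$ is finite because $\phi$ decays like $e^{-x}$ at infinity (property (ii) of \eqref{weight}). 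The $L^2$ conservation law yields $\|u(t)\|_{L^2}=\|u_0\|_{L^2}$.

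Putting these ingredients together produces
\[
|\mathcal{I}(t)| \lesssim_{\sigma,\delta}\;\frac{\mu_1(t)^{q/2}}{\mu(t)}\,\|u_0\|_{L^2}.
\]
Substituting $\mu_1(t)=t^{b}/\log t$ and $\mu(t)=t^{1-b}\log^{2}t$ the ratio becomes $t^{(b(q+2)-2)/2}\log^{-(q+4)/2}t$. The hypothesis $b\leq 2/(q+2)$ from \eqref{bmrelation} makes the exponent of $t$ nonpositive, and in the borderline case $b=2/(q+2)$ the negative power of $\log t$ takes over. Either way the expression is bounded for $t\gg 1$, which is the conclusion.

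The only delicate point is the arithmetic bookkeeping of the exponents on $t$ and $\log t$: one must confirm that the prescribed range of $b$ in \eqref{bmrelation} is precisely what is needed for $\mu_1^{q/2}/\mu$ to stay bounded, and that no hidden logarithmic blow-up occurs once the polynomial factor is flat. Everything else is a direct application of Cauchy--Schwarz and mass conservation.
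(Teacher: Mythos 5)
Your proposal is correct and follows essentially the same route as the paper: Cauchy--Schwarz separating $\|u(t)\|_{L^2}$ from the weights, a change of variables giving the factor $\mu_1(t)^{q/2}$, and the exponent count $t^{(b(q+2)-2)/2}\log^{-(q+4)/2}t$, which is bounded under $b\leq 2/(2+q)$. The arithmetic matches the paper's bound exactly, so nothing further is needed.
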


\begin{lemma}\label{BoundL1}
For any $t \gg 1$,  it holds that
\begin{equation}\label{NC-2}
\frac{1}{\mu_1(t)\mu(t)} \int_{\mathbb{R}}u^2(x,t)\,\psi_{\sigma}'\left( \frac{x}{\mu_1(t)} \right)\phi_{\delta}\left( \frac{x}{\mu_1^q(t)} \right)\,dx \leq 4\frac{d}{dt}\mathcal{I}(t) + h(t), 
\end{equation}
where $h(t)\in L^1(t \gg 1)$.
\end{lemma}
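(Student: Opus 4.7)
The plan is to differentiate $\mathcal I(t)$ in time, substitute the BO equation for $u_t$, and isolate the quadratic expression on the left of \eqref{NC-2} as the dominant piece, with everything else being either $\mathcal I'(t)$ itself or an $L^{1}([10,\infty))$ error. Writing $F(x,t):=\psi_{\sigma}(x/\mu_{1})\phi_{\delta}(x/\mu_{1}^{q})$, differentiation gives
\begin{equation*}
\mathcal I'(t)=-\frac{\mu'}{\mu^{2}}\int uF\,dx+\frac{1}{\mu}\int (\mathcal H u_{xx}-uu_x)F\,dx+\frac{1}{\mu}\int u F_t\,dx,
\end{equation*}
and the task is to extract the left-hand side of \eqref{NC-2} from the right-hand side.

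First I would integrate by parts in the nonlinear contribution, obtaining $-\frac{1}{\mu}\int u u_x F=\frac{1}{2\mu}\int u^{2}F_x$, and expand $F_x$ by the product rule as the sum of $\frac{1}{2\mu\mu_{1}}\int u^{2}\psi_{\sigma}'(x/\mu_{1})\phi_{\delta}(x/\mu_{1}^{q})\,dx$ and $\frac{1}{2\mu\mu_{1}^{q}}\int u^{2}\psi_{\sigma}(x/\mu_{1})\phi_{\delta}'(x/\mu_{1}^{q})\,dx$. The first piece is exactly half of the desired quantity; the second is non-positive, since $\psi$ is odd while $\phi'$ is odd with $\phi'\le0$ on $[0,\infty)$ by (i) of \eqref{weight}, so moving it to the right produces a positive remainder. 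Using $|\phi_\delta'|\le c\phi_\delta/\delta$ from (iii), $\|\phi_\delta\|_{\infty}\le\delta$, and mass conservation $\|u(t)\|_2\le\|u_0\|_2$, this remainder is bounded by $C\|u_0\|_2^2/(\mu\mu_1^{q})\sim t^{-(1+b(q-1))}\log^{q-2}t$, which is $L^{1}$ precisely because $q>1$.

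Next I would tackle the dispersive term via the skew-adjointness of $\mathcal H\partial_x^{2}$ on $L^{2}$, yielding $\int(\mathcal H u_{xx})F\,dx=-\int u\,\mathcal H F_{xx}\,dx$. Cauchy--Schwarz combined with the $L^{2}$-isometry of $\mathcal H$ and mass conservation bounds this by $\|u_0\|_{2}\|F_{xx}\|_{2}/\mu$. A product-rule expansion and rescaling, using (ii)--(iii) of \eqref{weight}, give $\|F_{xx}\|_{2}\lesssim\mu_{1}^{-3/2}$; the dominant term is the one where both derivatives fall on $\psi_{\sigma}$, while the two other contributions are smaller by factors of $\mu_{1}^{1-q}$ and $\mu_{1}^{3(1-q)/2}$, negligible since $q>1$. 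Hence the dispersive contribution is controlled by $1/(\mu\mu_{1}^{3/2})\sim t^{-(1+b/2)}\log^{-1/2}t$, which is integrable. The weight-evolution pieces $-\frac{\mu'}{\mu^{2}}\int u F$ and $\frac{1}{\mu}\int u F_t$ are treated analogously, via $\|F\|_{2}\lesssim\mu_{1}^{q/2}$, $\|F_t\|_{2}\lesssim(\mu_1'/\mu_1)\mu_{1}^{q/2}$, and \eqref{Relationderivatives}: each is at most $C\mu_{1}^{q/2}/(\mu t)\sim t^{-(2-b(2+q)/2)}\log^{-(2+q/2)}t$, lying in $L^{1}$ under the standing condition $b\le 2/(2+q)$ from \eqref{bmrelation}, with the logarithmic factor securing the endpoint case.

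Assembling the three steps and multiplying by $2$ will yield $\frac{1}{\mu\mu_1}\int u^2\psi_\sigma'(x/\mu_1)\phi_\delta(x/\mu_1^q)\,dx\le 2\mathcal I'(t)+h(t)$ for some $h\in L^{1}([10,\infty))$; the constant $4$ in \eqref{NC-2} leaves a margin that absorbs the implicit constants. The main nuisance is bookkeeping: checking that the conditions $q>1$ and $b\le 2/(2+q)$ are precisely what match the decay rates of the various remainders to the $1/t$ threshold needed for $L^{1}$-integrability. The dispersive term, which at higher functional levels requires the commutator machinery of Lemmas \ref{CC} and \ref{comm2}, is tame here because $F$ is smooth and sharply localized, so that the straightforward estimate of $\|F_{xx}\|_{2}$ already suffices; those commutator tools become essential only at the $H^{1/2}$-level (Theorem \ref{H1BO}) and in Theorems \ref{to-the-right}--\ref{to-the-left}.
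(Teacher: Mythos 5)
Your proposal is correct and follows essentially the same route as the paper's: differentiate $\mathcal I(t)$, substitute the equation, integrate by parts so that the nonlinear term produces $\tfrac{1}{2\mu\mu_1}\int u^2\psi_\sigma'\phi_\delta$ as the main piece, and verify that the dispersive term, the $\psi_\sigma\phi_\delta'$ piece, and the weight-evolution terms are all $L^1(\{t\gg1\})$ under $q>1$ and $b\le 2/(2+q)$, exactly as in the paper. The one place you diverge is the term where $\partial_t$ falls on $\psi_\sigma(x/\mu_1(t))$: the paper splits it by Young's inequality with $\epsilon=\mu_1'(t)$ and absorbs half of the main term (which is where the constant $4$ in \eqref{NC-2} originates), whereas you estimate it directly by Cauchy--Schwarz using the exponential decay of $\psi_\sigma'=\phi(\cdot/\sigma)$, which yields an integrable bound of order $t^{-(2-3b/2)}\log^{-5/2}t$ and hence the inequality with constant $2$ instead of $4$; since the left-hand side of \eqref{NC-2} is nonnegative, the bound with constant $2$ implies the stated one with constant $4$ (with $h$ replaced by $2h$), so nothing is lost and your version is, if anything, slightly more economical.
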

  
\begin{proof}
We have that
  \begin{equation}\label{l42-1}
   \begin{split}
   \frac{d}{dt}\mathcal{I}(t)
   &= \frac{1}{\mu(t)}\int_{\mathbb{R}}\partial_t\left( u\psi_{\sigma}\left( \frac{x}{\mu_1(t)} \right) \phi_{\delta}\left( \frac{x}{\mu_1^q(t)} \right)\right)\,dx\\
   &\hskip10pt - \frac{\mu'(t)}{\mu^2(t)}\int_{\mathbb{R}} u\psi_{\sigma}\left( \frac{x}{\mu_1(t)} \right) \phi_{\delta}\left( \frac{x}{\mu_1^q(t)} \right)\,dx\\
   &= A(t) + B(t).
   \end{split}
  \end{equation}
  
Cauchy-Schwarz inequality and the conservation of mass, $I_2$ in \eqref{claw}, yield
    \begin{equation}\label{l42-2}
     \begin{aligned}
   |B(t)| &\leq \left|\frac{\mu'(t)}{\mu^2(t)}\right| \left\|u(t)\right\|_{L^2}\left\|\psi_{\sigma}\left( \frac{\cdot}{\mu_1(t)} \right)\right\|_{L^{\infty}}\left\|\phi_{\delta}\left( \frac{\cdot}{\mu_1^q(t)} \right)\right\|_{L^2} \\
   & \lesssim_{\sigma , \delta} \frac{1}{t^{(4-2b-bq)/2}}\frac{1}{\log^{(4+q)/2}t}\|u_0\|_{L^2}.\\
   \end{aligned}
    \end{equation}
Hence $B(t)\in L^1(\{t \gg 1\})$ whenever $b\leq \frac{2}{2 +q}$. We remark that this term is bounded in $\{t \gg 1\}$.


To estimate $A(t)$, we first differentiate in time to write
\begin{equation}\label{TermsEq.}
\begin{split}
A(t)&=  \frac{1}{\mu(t)}\int_{\mathbb{R}} u_t(x,t)\psi_{\sigma}\left( \frac{{x}}{\mu_1(t)} \right) \phi_{\delta}\left( \frac{{x}}{\mu_1^q(t)} \right)\,dx\\
 &\hskip10pt - \frac{\mu_1'(t)}{\mu_1(t)\mu(t)}\int_{\mathbb{R}} u(x,t) \left( \frac{{x}}{\mu_1(t)} \right)\psi_{\sigma}'\left( \frac{{x}}{\mu_1(t)} \right) \phi_{\delta}\left( \frac{{x}}{\mu_1^q(t)} \right)\,dx\\
&\hskip10pt  - \frac{q\mu_1'(t)}{\mu_1(t)\mu(t)}\int_{\mathbb{R}} u(x,t) \psi_{\sigma}\left( \frac{{x}}{\mu_1(t)} \right)\left( \frac{{x}}{\mu_1^q(t)} \right) \phi_{\delta}'\left( \frac{{x}}{\mu_1^q(t)} \right)\,dx\\
& = A_1(t)+A_2(t)+A_3(t).
\end{split}
\end{equation}

Using the equation in \eqref{BO} and integrating by parts yield
\begin{equation} \label{l42-3}
\begin{split}
A_1(t) &=\frac{1}{\mu(t)}\int_{\mathbb{R}}\mathcal{H}u(x,t)\,\partial_x^2\left(\psi_{\sigma}\left(\frac{{x}}{\mu_1(t)} \right) \phi_{\delta}\left(\frac{{x}}{\mu_1^q(t)} \right)\right)\,dx\\
&\hskip15pt + \frac{1}{2\mu(t)\mu_1(t)}\int_{\mathbb{R}} u^2(x,t)\psi_{\sigma}'\left(\frac{{x}}{\mu_1(t)} \right) \phi_{\delta}\left(\frac{{x}}{\mu_1^q(t)} \right)\,dx\\
    &\hskip15pt +\frac{1}{2\mu(t)\mu_1^q(t)}\int_{\mathbb{R}}u^2(x,t) \psi_{\sigma}\left(\frac{{x}}{\mu_1(t)} \right) \phi_{\delta}'\left(\frac{{x}}{\mu_1^q(t)} \right)\,dx\\
    &=: A_{1,1}(t) + A_{1,2}(t) + A_{1, 3}(t).
    \end{split}
\end{equation}

We remark that  $ \, A_{1, 2}(t)$ is the term we want to estimate in \eqref{l42-3}. Then we need to show that the reminder terms
are in $L^1(\{t\gg 1\})$.

 Differentiating with respect to $x$, using the Cauchy-Schwarz inequality, Hilbert's transform properties, the conservation of mass,
 and the definition of $\mu(t)$ and $\mu_1(t)$ we deduce that  
\begin{equation}\label{l42-4}
\begin{split}
|A_{1,1}(t)|&\le\frac{1}{\mu(t)\mu_1^{3/2}(t)}\|u(t)\|_{L^2}\|\psi_{\sigma}''\|_{L^2}\|\phi_{\delta}\|_{L^{\infty}}\\
&\hskip15pt +\frac{1}{\mu(t)\mu_1^{(1+2q)/2}(t)}\|u(t)\|_{L^2}\|\psi_{\sigma}'\|_{L^2}\|\phi_{\delta}'\|_{L^{\infty}}\\
&\hskip15pt+\frac{1}{\mu(t)\mu_1^{3q/2}(t)}\|u(t)\|_{L^2}\|\psi_{\sigma}\|_{L^{\infty}}\|\phi_{\delta}''\|_{L^2}\\
&\lesssim_{\sigma,\delta}  \frac{ \|u_0\|_{L^2}}{t^{(2 + b)/2}\log^{1/2}t}+\frac{ \|u_0\|_{L^2}}{t^{(2 - b + 2bq)/2}\log^{(\frac{3}{2}-q)}(t)} \\
&\hskip15pt +\frac{ \|u_0\|_{L^2}}{t^{(2 - 2b + 3bq)/2}\log^{(4-3q)/2}t}.
\end{split} 
\end{equation}

Since $q>1$ it follows  that $A_{1,1}(t)\in L^1(\{t\gg1\})$.

The term $A_{1,3}$ can be bounded by employing the conservation of mass, and the definition of 
$\mu(t)$ and $\mu_1(t)$.
\begin{equation}\label{l42-9}
\begin{split}
 |A_{1,3}| &\leq  \frac{\|u(t)\|_{L^2}^2}{2|\mu(t)\mu_1^q(t)|}\left\|\psi_{\sigma}\left(\frac{{x}}{\mu_1(t)} \right)\right\|_{L^{\infty}}\left\|\phi_{\delta}'\left(\frac{{x}}{\mu_1^q(t)} \right)\right\|_{L^{\infty}}\\ 
      &\lesssim_{\sigma,\delta} \frac{\|u_0\|_{L^2}^2}{t^{1-b + bq}\log^{(2-q)}t},
\end{split}
\end{equation}
because of  $q > 1$ one has that $A_{1,3}(t) \in L^1(\{t \gg1\})$.
               
\medskip

Next we turn our attention to the other terms of (\ref{TermsEq.}). First, by means of Young's inequality, we have for $\epsilon > 0$
\begin{equation*}
\begin{split}
 |A_2(t)|
  &\leq \Big|\frac{\mu_1'(t)}{\mu_1(t)\mu(t)} \Big|\int_{\mathbb{R}}\Big|\psi_{\sigma}'\Big( \frac{{x}}{\mu_1(t)} \Big) \phi_{\delta}\Big( \frac{{x}}{\mu_1^q(t)} \Big)\Big|\Big[ \frac{u^2}{4\epsilon} + 4\epsilon\Big|\frac{{x}}{\mu_1(t)}\Big|^2\Big]\,dx    \\
  &\leq \frac{1}{4\epsilon}\Big|\frac{\mu_1'(t)}{\mu_1(t)\mu(t)} \Big|\int_{\mathbb{R}} u^2(x,t)\psi_{\sigma}'\Big( \frac{{x}}{\mu_1(t)} \Big) \phi_{\delta}\Big( \frac{{x}}{\mu_1^q(t)} \Big)\,dx\\
  &\hskip15pt + 4\epsilon\Big|\frac{\mu_1'(t)}{\mu_1(t)\mu(t)} \Big| \,\|\phi_{\delta}\Big( \frac{\cdot}{\mu_1^q(t)} \Big)\|_{L^{\infty}}
  \|\Big(\frac{\cdot}{\mu_1^q(t)}\Big)^2\psi_{\sigma}'\Big( \frac{\cdot}{\mu_1^q(t)}\Big)\|_{L^1}.
    \end{split}
    \end{equation*}

Then, taking $\epsilon = \mu_1'(t)$, which is positive in $\{t\gg 1 \} $, we get
    \begin{equation}\label{l42-10}
     \begin{split}
        |A_2(t)| & \leq \Big|\frac{1}{4\mu_1(t)\mu(t)} \Big|\int_{\mathbb{R}}u^2\psi_{\sigma}'\left( \frac{{x}}{\mu_1(t)} \right) \phi_{\delta}\left( \frac{{x}}{\mu_1^q(t)} \right)dx \\
        &\hskip15pt+C_{\delta,\sigma}\frac{(b\log t-1)^2}{t^{3-3b}\log^6t}\\
        &= \frac{1}{2}A_{1,2}(t) +C_{\delta,\sigma}\frac{(b\log t-1)^2}{t^{3-3b}\log^6t},
     \end{split}
    \end{equation}
where $C_{\sigma,\delta}$ is a constant depending on $\sigma$ and $\delta$.

Notice that the last term in the last inequality of \eqref{l42-10} is integrable in ${t \gg 1}$ since $b < \frac{2}{3}$.

Finally, we consider the term $A_3(t)$. Young's inequality and the conservation of mass tell us that
    \begin{equation}\label{l42-11}
    \begin{split}
     |A_3(t)|&\leq \left| \frac{q\mu_1'(t)}{\mu_1(t)\mu(t)}\right|\|\psi_{\sigma}\|_{L^{\infty}}\int_{\mathbb{R}} t^{1-b} u^2(x,t)\,dx\\
      &\hskip15pt +\left| \frac{q\mu_1'(t)}{\mu_1(t)\mu(t)}\right| \|\psi_{\sigma}\|_{L^{\infty}}\!\!\int_{\mathbb{R}}\frac{1}{t^{1-b}}\left[\left( \frac{{x}}{\mu_1^q(t)} \right) \phi_{\delta}'\left( \frac{x}{\mu_1^q(t)} \right)\right]^2\,dx\\
     & \lesssim_{\sigma,\delta} \left| \frac{qt^{1-b}\mu_1'(t)}{\mu_1(t)\mu(t)}\right| + \left| \frac{q\mu_1'(t)\mu_1^q(t)}{t^{1-b}\mu_1(t)\mu(t)}\right|.
    \end{split}
    \end{equation}
    
Hence, the conditions on \eqref{Relationderivatives} imply 
\begin{equation}\label{l42-12}
|A_{3}(t)| \lesssim_{\sigma,\delta} \frac{1}{t\log^2t} + \frac{1}{t^{3-b(2+q)}\log^{2+q}t}.
\end{equation}
Since $b \leq \frac{2}{2+q} $, $A_3(t) \in L^1(\{t \gg 1\})$.

Gathering the information in \eqref{l42-1}, \eqref{TermsEq.}, \eqref{l42-4}, \eqref{l42-9}, \eqref{l42-10}, \eqref{l42-11} 
and \eqref{l42-12} together we conclude that
    \begin{equation}\label{l42-13}
    \begin{split}
\frac{1}{2\mu(t)\mu_1(t)}\int_{\mathbb{R}} u^2(x,t)\psi_{\sigma}'\left(\frac{{x}}{\mu_1(t)} \right) \phi_{\delta}\left(\frac{{x}}{\mu_1^q(t)} \right)\,dx\leq & \frac{d}{dt}\mathcal{I}(t)+h(t)
    \end{split}
    \end{equation}
where $h(t)\in L^1(\{t\gg 1\})$, as desired.
\end{proof}

The next lemma will give us a key bound in our analysis.

\begin{lemma}\label{intl2bounded}
 Assume that $u_0 \in L^2(\R)$. Let $u \in C(\mathbb{R}: L^2(\mathbb{R}))\cap L^{\infty}(\mathbb{R}: L^2(\mathbb{R})) $ be the solution of the IVP \eqref{BO}. Then, there exists a constant $0< C < \infty$, such that 
 \begin{equation}
  \int_{ \{t \gg 1\} } \frac{1}{t\log t}\int_{B_{t^b}} u^2(x,t)\, dxdt \leq C. 
 \end{equation}

\end{lemma}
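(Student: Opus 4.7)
The plan is to integrate the differential inequality of Lemma \ref{BoundL1} in time and then lower-bound the weight on $B_{t^b}$ to extract the desired integral. Concretely, pick $t_0$ with $[t_0,\infty)\subset\{t\gg 1\}$ and integrate \eqref{NC-2} on $[t_0,T]$ to obtain
\[
\int_{t_0}^T \frac{1}{\mu_1(t)\mu(t)}\int_\R u^2(x,t)\,\psi'_\sigma\Bigl(\frac{x}{\mu_1(t)}\Bigr)\phi_\delta\Bigl(\frac{x}{\mu_1^q(t)}\Bigr)\,dx\,dt \leq 4\bigl(\mathcal I(T)-\mathcal I(t_0)\bigr)+\int_{t_0}^T h(t)\,dt.
\]
Lemma \ref{Functionalbounded} bounds $|\mathcal I(T)-\mathcal I(t_0)|$ uniformly in $T$, and the remainder integral is bounded by $\|h\|_{L^1(\{t\gg 1\})}$ thanks to Lemma \ref{BoundL1}; thus the right-hand side is $\leq C$ independently of $T$.

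Second, I would compute the prefactor. Using the definitions of $\mu_1$ and $\mu$,
\[
\mu_1(t)\mu(t)=\frac{t^b}{\log t}\cdot t^{1-b}\log^2 t=t\log t,
\]
which matches the weight in the target inequality. The remaining task is to show that the weight $\psi'_\sigma(\cdot/\mu_1(t))\,\phi_\delta(\cdot/\mu_1^q(t))$ is bounded from below by a positive constant on $B_{t^b}$. For the factor $\phi_\delta(x/\mu_1^q)=\delta\,\phi(x/(\delta\,\mu_1^q))$: since $q>1$, for $|x|\leq t^b$ one has $|x|/(\delta\,\mu_1^q(t))\leq \log^q t/(\delta\,t^{b(q-1)})\to 0$, so by property \eqref{weight}(ii) this factor is $\geq \delta/2$ once $t$ is sufficiently large. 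For the factor $\psi'_\sigma(x/\mu_1)=\phi(x/(\sigma\mu_1))$, one uses the monotonicity \eqref{weight}(i), positivity of $\phi$, and the explicit lower bound $\phi(y)\geq e^{-|y|}$ from \eqref{weight}(ii) to extract a positive lower bound on $B_{t^b}$ depending only on $\sigma$ and $b$; this lower bound is then absorbed into the constant $C$.

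Combining the uniform bound on the LHS integral with the uniform lower bound on the weight and sending $T\to\infty$ produces
\[
\int_{\{t\gg 1\}}\frac{1}{t\log t}\int_{B_{t^b}} u^2(x,t)\,dx\,dt\leq C,
\]
which is the claim. I expect the delicate step to be the lower bound on $\psi'_\sigma(x/\mu_1(t))$ over the full ball $B_{t^b}$: because $x/\mu_1(t)$ is of size $\log t/\sigma$ on $B_{t^b}$, the function $\phi$ is evaluated far from the origin, where it only enjoys an exponential lower bound. Managing this decay against the admissible range of $b$ in \eqref{bmrelation} is the main technical obstacle, and it is precisely the reason the weight $\mu_1(t)=t^b/\log t$ carries the extra $\log t$ factor in its definition.
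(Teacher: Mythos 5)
Your setup---integrating \eqref{NC-2} over $[t_0,T]$, controlling the boundary terms with Lemma \ref{Functionalbounded} and the $L^1$ bound on $h$ from Lemma \ref{BoundL1}, and computing $\mu_1(t)\mu(t)=t\log t$---is exactly the paper's route. The gap is in the step you yourself flag as delicate, and it is fatal as written: there is no positive lower bound for $\psi_\sigma'(x/\mu_1(t))=\phi\bigl(x/(\sigma\mu_1(t))\bigr)$ on $B_{t^b}$ ``depending only on $\sigma$ and $b$''. On that ball $|x|/(\sigma\mu_1(t))$ ranges up to $\log t/\sigma$, so by \eqref{weight}(ii) the best available bound is $\phi\bigl(x/(\sigma\mu_1(t))\bigr)\geq e^{-\log t/\sigma}=t^{-1/\sigma}$, which tends to $0$ and cannot be ``absorbed into the constant $C$''. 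Carrying this $t$-dependent factor through only yields $\int_{\{t\gg1\}}\frac{1}{t^{1+1/\sigma}\log t}\int_{B_{t^b}}u^2\,dx\,dt\leq C$, which is vacuous: since $t^{-1-1/\sigma}$ is integrable at infinity, that estimate already follows from conservation of mass and no longer forces the $\liminf$ in Theorem \ref{L2BO} to vanish.

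The missing idea is the parameter slack hidden in the paper's phrase ``whenever $q>1$ is chosen sufficiently close to $1$ and $b$ slightly smaller if necessary'': to prove the estimate for the ball $B_{t^b}$ one runs the entire virial construction of Lemmas \ref{Functionalbounded} and \ref{BoundL1} with $\mu_1(t)=t^{b'}/\log t$ for some $b'\in(b,\tfrac23)$ satisfying \eqref{bmrelation} (possible since $q$ may be taken close to $1$). Then on $|x|\leq t^{b}$ one has $|x|/(\sigma\mu_1(t))\leq \sigma^{-1}t^{\,b-b'}\log t\to 0$ and likewise $|x|/(\delta\mu_1^q(t))\to 0$, so by \eqref{weight}(ii) both factors of the weight are eventually identically equal to their values at the origin; choosing $\delta\geq 1$ the product is $\geq 1$ on $B_{t^{b}}$ for all large $t$, and the comparison $\int_{B_{t^{b}}}u^2\,dx\leq\int u^2\,\psi_\sigma'(x/\mu_1(t))\,\phi_\delta(x/\mu_1^q(t))\,dx$ holds with constant $1$. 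With that modification (and no change to the rest of your argument) the proof closes.
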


\begin{proof}
 From the definition, $\mu(t)\mu_1(t)=t\log t$ and a straightforward computation involving the properties of the function $\phi$, it follows that
    \begin{equation*}
    \frac{1}{\mu_1(t)\mu(t)}\int_{B_{t^b}}u^2(x,t)\,dx \leq \frac{1}{\mu_1(t)\mu(t)}\int_{\mathbb{R}}u^2\psi_{\sigma}'\left( \frac{{x}}{\mu_1(t)}\right)\phi_{\delta}\left( \frac{{x}}{\mu_1^q(t)}\right) \,dx, 
    \end{equation*}
for suitable $\sigma$ and $\delta$, whenever $q> 1$ is chosen sufficiently close to 1 and $b$ slightly smaller if necessary. Lemma \ref{BoundL1} implies that
\begin{equation}\label{l43-1}
 \int_{\{t \gg 1\}}\frac{1}{\mu_1(t)\mu(t)}\int_{B_{t^b}}u^2(x,t)\,dxdt \leq -\mathcal{I}(t) + \int_{\{t \gg 1\}}|h(t)|\,dt.
\end{equation}
The first term on the right hand side of inequality \eqref{l43-1} is bounded because of  $b \leq \frac{2}{2 + q} < \frac{2}{3}$ and the last one is bounded by the proof of Lemma \ref{BoundL1}. This
completes the proof of the lemma. 
 
\end{proof}

Now we are ready to prove Theorem \ref{L2BO}.
        
\subsection{Proof of Theorem  \ref{L2BO}}

Since the function $\frac{1}{t\log t} \notin L^1(B^c_r(1))$, from  the previous lemma, we can ensure that there exists a sequence $(t_n) \to \infty$, such that 
\begin{equation*}
 \lim_{n \to \infty}\int_{B_{(t_n)}^b}u^2(x, t_n)\,dx = 0.
\end{equation*}
Therefore, $0$ is an accumulation point and using that $u^2 \geq 0$ we can conclude the result.



To end this section we will give a sketch of the proof of Corollary \ref{corL2BO}.

\subsection{Proof of Corollary \ref{corL2BO}}
The proof of this result follows the same argument as the proof given to prove Theorem \ref{L2BO}. Hence
we will present the new details introduced in the proof. We consider
the functional
\begin{equation*}
\mathcal{I}_{\rho}(t)=\frac{1}{\mu(t)}\int u(x,t)\,\psi_{\sigma}\left(\frac{x-\rho(t)}{\mu_1(t)}\right)\phi_{\sigma}\left(\frac{x-\rho(t)}{\mu_1^q(t)}\right)\,dx
\end{equation*}
where $\rho(t)=\pm t^m$, $m$ as in the statement of the corollary, $\mu(t)$ and $\mu_1(t)$ defined as in \eqref{parameters}, and
$\psi_{\sigma}$ and $\phi_{\delta}$ defined as above.

\medskip 

As in Lemma \ref{Functionalbounded} we have that
\begin{equation*}
\sup_{t\gg 1} | \mathcal{I}_{\rho}(t)| <\infty.
\end{equation*}

We also obtain a similar inequality as \eqref{NC-2} in Lemma \ref{BoundL1}, i.e.
\begin{equation*}
\begin{aligned}
& \frac{1}{\mu_1(t) \mu(t)}\int u^2(x,t)\,\psi_{\sigma}'\left(\frac{x-\rho(t)}{\mu_1(t)}\right)\phi_{\sigma}\left(\frac{x-\rho(t)}{\mu_1^q(t)}\right)\,dx
\\
&\qquad  \le 4\frac{d}{dt}\mathcal{I}_{\rho}(t)+h_{\rho}(t),
\end{aligned} 
\end{equation*}
where $h_{\rho}(t)\in L^1(\{t\gg1\})$. Besides the terms previously handle in the proof of Lemma  \ref{BoundL1}, here we need to estimate two additional
new terms
\[
- \frac{\rho'(t)}{\mu_1(t)\mu(t)}\int_{\mathbb{R}} u(x,t) \psi_{\sigma}'\left( \frac{x-\rho(t)}{\mu_1(t)} \right) \phi_{\delta}\left( \frac{x-\rho(t)}{\mu_1^q(t)} \right)\,dx=A(t)
\]
and 
\[
- \frac{\rho'(t)}{\mu_1^q(t)\mu(t)}\int_{\mathbb{R}} u(x,t) \psi_{\sigma}\left( \frac{x-\rho(t)}{\mu_1(t)} \right) \phi_{\delta}'\left( \frac{x-\rho(t)}{\mu_1^q(t)} \right)\,dx=B(t).
\]

The Cauchy-Schwarz inequality and the mass conservation yield
\begin{equation*}
\begin{split}
|A(t)+B(t)| &\le \left|\frac{\rho'(t)}{\mu_1^{1/2}(t)\mu(t)} \right| \|u_0\|_{L^2}\|\psi_{\sigma}'\|_{L^2}  \|\phi_{\delta}\|_{L^\infty}\\
&\hskip10pt + \left|\frac{\rho'(t)}{\mu_1^q(t)\mu(t)}\right| \|u_0\|_{L^2} \|\psi_{\sigma}\|_{L^{\infty}}  \|\phi_{\delta}'\|_{L^2}\\
& \lesssim_{\sigma,\delta, m} \frac{1}{t^{(4-2m -b)/2 }\log^{3/2}t}+\frac{1}{t^{(4 -2b -2m + bq)/2}\log^{(4-q)/2}t}.
\end{split}
\end{equation*}

We observe that the first term in the last inequality is in $L^1(\{t\gg 1\})$ since $m \leq 1 - \frac{b}{2}$. Similarly,  the last term in last inequality  is also in 
$L^1(\{t\gg 1\})$ since $m \leq 1 - \frac{b}{2} < 1 - b\frac{1 - q}{2}$.

\medskip

From this point on the argument of proof to establish Theorem \ref{L2BO} can be applied to end the proof of Corollary \ref{corL2BO}.


\section{Proof of Theorem \ref{H1BO} (Asymptotic Behavior in $H^{\frac{1}{2}}(\mathbb{R})$)}\label{Sect:4}

In this section contains the proof of Theorem \ref{H1BO}. The argument follows closely what we did in the previous section. Thus, we will give only the main new ingredients in the proof.

\subsection{ Asymptotic Behavior of $\left\|D^{1/2} u(t)\right\|_{L^2}$}

\begin{lemma}\label{BOasymp}
Let u $\in C(\mathbb{R}: H^{\frac{1}{2}}(\mathbb{R}))\cap L^{\infty}(\mathbb{R}: H^{\frac{1}{2}}(\mathbb{R}))$ the solution
of the IVP \eqref{BO}. Then, there exists a constant $C> 0$ such that
\begin{equation}\label{l61-2}
 \int_{\{ t \gg 1 \}}\frac{1}{t\log t}\int_{B_{t^b}(0)} \left| D^{1/2}u(x,t) \right|^2\,dxdt \leq C.
\end{equation}

\end{lemma}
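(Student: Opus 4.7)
The plan is to mirror the strategy of Lemma \ref{intl2bounded}, replacing the linear functional $\mathcal{I}(t)$ by a quadratic one whose time derivative produces an $|D^{1/2} u|^2$ contribution localized to $B_{t^b}$. Concretely, I would set
\[
\mathcal{J}(t) = \frac{1}{\mu(t)} \int_{\R} u^2(x,t)\, \psi_{\sigma}\!\left(\frac{x}{\mu_1(t)}\right) \phi_{\delta}\!\left(\frac{x}{\mu_1^q(t)}\right) dx,
\]
with $\mu, \mu_1$ as in \eqref{parameters}, $q>1$, and $\psi_\sigma,\phi_\delta$ as in Section \ref{Sect:3}. The $L^2$-conservation and the scaling of the weights give $\sup_{t \gg 1}|\mathcal{J}(t)| < \infty$ as in Lemma \ref{Functionalbounded}. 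Differentiating $\mathcal{J}(t)$, using $u_t = \mathcal{H} u_{xx} - u u_x$, and setting $\chi(x,t) := \psi_\sigma(x/\mu_1)\phi_\delta(x/\mu_1^q)$, the crucial step is to extract from $\frac{2}{\mu}\int u\,\mathcal{H} u_{xx}\,\chi\,dx$ the virial-type identity
\[
2\int u\,\mathcal{H} u_{xx}\,\chi\,dx = -\int \chi'\,|D^{1/2} u|^2\,dx + 2\int [D^{1/2},\chi]\,u\cdot D^{1/2} u_x\,dx,
\]
derived via $\mathcal{H}\partial_x = |D| = D^{1/2} D^{1/2}$, Parseval, and the Leibniz-type splitting $D^{1/2}(u\chi) = \chi D^{1/2} u + [D^{1/2},\chi] u$. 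Unpacking $\chi'(x) = \mu_1^{-1}\psi_\sigma'(x/\mu_1)\phi_\delta(\cdot) + \mu_1^{-q}\psi_\sigma(\cdot)\phi_\delta'(\cdot)$ and using $\psi_\sigma' \geq 0$, the first term on the right yields the desired positive principal contribution $\tfrac{c}{\mu\mu_1}\int_{B_{t^b}} |D^{1/2} u|^2\,dx$ (up to a piece localized where $\phi_\delta'$ is nonzero, handled as in \eqref{l42-9}).

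The remaining pieces play the role of the integrable remainder $h(t)$ in Lemma \ref{BoundL1}. The cubic nonlinearity $\frac{2}{3\mu}\int u^3\chi'$ is handled via Lemma \ref{GNSinequality}, giving $\|u\|_{L^3}^3 \le C\|u\|_{L^2}^2\|D^{1/2} u\|_{L^2}$, combined with the conservation of $\|u\|_{H^{1/2}}$ and the factor $\|\chi'\|_{L^\infty}\sim \mu_1^{-1}$. The terms involving $\mu',\mu_1'$ and the $\phi_\delta'$-supported region are estimated exactly as in \eqref{l42-2} and \eqref{l42-10}--\eqref{l42-12}. The new technical ingredient is the commutator $2\int [D^{1/2},\chi]\,u\cdot D^{1/2} u_x$: after integrating by parts and using the anti-self-adjointness of $[D^{1/2},\chi]$ (together with the identity $\partial_x[D^{1/2},\chi] = [D^{1/2},\chi']+[D^{1/2},\chi]\partial_x$), this reduces to expressions controlled, via Lemma \ref{comm2}, by $\|\widehat{\chi'}\|_{L^1}\|D^{1/2} u\|_{L^2}^2 + \|\widehat{\chi''}\|_{L^1}\|u\|_{L^2}\|D^{1/2} u\|_{L^2}$, which by the scaling of the weights is of order $\mu_1(t)^{-1}\|u\|_{H^{1/2}}^2$.

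The main obstacle I anticipate is the borderline integrability of the commutator term: its crude size is $\tfrac{1}{\mu(t)\mu_1(t)}=\tfrac{1}{t\log t}$, which sits just outside $L^1(\{t\gg 1\})$. The resolution is to exploit the specific structure of Lemma \ref{comm2}, which provides exactly the $D^{1/2}$-gain needed to pair against $D^{1/2} u_x = \partial_x D^{1/2} u$, and to absorb any residual multiple of $\int \chi'|D^{1/2} u|^2$ into the principal term on the left via a Young-type splitting. Once the commutator is reduced to a genuinely $L^1$-integrable remainder, integrating the resulting inequality over $\{t\gg 1\}$ and using the boundedness of $\mathcal{J}$ yields \eqref{l61-2}, exactly as at the end of the proof of Lemma \ref{intl2bounded}.
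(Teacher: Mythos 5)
Your overall strategy --- a weighted quadratic virial $\mathcal J(t)=\mu(t)^{-1}\int u^2\,\psi_\sigma(x/\mu_1(t))\,dx$ (the paper drops your extra factor $\phi_\delta(x/\mu_1^q)$, which is immaterial), extraction of the localized $|D^{1/2}u|^2$ term from the dispersive part, and an integrable remainder --- is exactly the paper's. The problem is that two of your remainder bounds land precisely on the non-integrable borderline $1/(t\log t)$, and the fixes you propose do not close them. For the commutator: your decomposition pairs $[D^{1/2},\chi]u$ against $D^{1/2}u_x$, and after integration by parts you only reach $\|\widehat{\chi'}\|_{L^1}\|D^{1/2}u\|_{L^2}^2\sim \mu_1^{-1}\|u\|_{H^{1/2}}^2$, i.e.\ one of the two derivative gains has been spent on $u$ instead of on the weight. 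Lemma \ref{comm2} has already been used at full strength at that point, and the residual $\|D^{1/2}u\|_{L^2(\R)}^2$ is a \emph{global} quantity, so it cannot be absorbed by a Young-type splitting into the principal term $\int\chi'|D^{1/2}u|^2$, which lives on $|x|\lesssim\mu_1(t)$. The paper's decomposition is arranged so that \emph{both} derivatives coming from $\mathcal H\partial_x^2$ fall on the weight and none on $u$: it writes $2\int u\mathcal H\partial_x^2u\,\varphi=-2\int \partial_xu\,\mathcal H\partial_xu\,\varphi-2\int u\,\mathcal H\partial_xu\,\partial_x\varphi$, controls the first piece by the Calder\'on commutator $\partial_x[\mathcal H;\varphi]\partial_x$ of Lemma \ref{CC} (bounded by $\|\partial_x^2\varphi\|_{L^\infty}\|u_0\|_{L^2}^2\sim\mu_1^{-2}$), and from the second extracts the main term plus $\int u\,D^{1/2}[D^{1/2};\partial_x\varphi]u$, which Lemma \ref{comm2} bounds by $\|\widehat{\partial_x^2\varphi}\|_{L^1}\|u_0\|_{L^2}^2\sim\mu_1^{-2}$. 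Both errors are of order zero in $u$ and order two in the weight (cf.\ Remark \ref{rem-imp}), hence $O(t^{-(1+b)})$ after the $1/\mu$ prefactor: genuinely integrable, no absorption needed, and no expression involving $u_x$ or $D^{1/2}u_x$ (which is anyway unavailable for $u\in H^{1/2}$) ever appears.

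The cubic term has the same defect: your bound $\frac{1}{\mu\mu_1}\|u\|_{L^3}^3\lesssim\frac{1}{\mu\mu_1}\|u\|_{L^2}^2\|D^{1/2}u\|_{L^2}\sim\frac{1}{t\log t}$ is not in $L^1(\{t\gg1\})$. One must keep the localization: the paper uses the partition $\zeta_n$ together with the localized Gagliardo--Nirenberg and fractional Leibniz estimates to reduce $\int|u|^3\phi_\sigma(x/\mu_1)\,dx$ to $\int u^2\phi_\sigma(x/\mu_1)\,dx$ (see \eqref{l61-15}), and then the time-integrability of $\frac{1}{\mu\mu_1}\int u^2\phi_\sigma$ is exactly the content of the previously established $L^2$ result, Lemma \ref{intl2bounded}. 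This bootstrap from the $L^2$ lemma is the missing ingredient in your treatment of the nonlinear term.
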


\begin{proof}
 Consider the functional 
    \begin{equation}\label{l61-3}
{\mathcal{J}}(t) := \frac{1}{\mu(t)}\int_{\mathbb{R}}u^2(x,t)\psi_{\sigma}\left(\frac{x}{\mu_1(t)} \right)\,  dx.
    \end{equation}
 where $\mu(t)$ and $\mu_1(t)$ were defined in \eqref{parameters}.

Differentiating \eqref{l61-3} yields
\begin{equation}\label{l61-4}
\begin{split}
     \frac{d}{dt}{\mathcal{J}}(t) &=- \frac{\mu'(t)}{\mu^2(t)}\int_{\mathbb{R}}u^2(x,t)\,\psi_{\sigma}\left(\frac{x}{\mu_1(t)} \right)  \,dx\\
      &\hskip10pt +\frac{2}{\mu(t)}\int_{\mathbb{R}}u(x,t)\partial_tu(x,t)\,\psi_{\sigma}\left(\frac{x}{\mu_1(t)} \right)  \, dx\\
     &\hskip10pt - \frac{\mu_1'(t)}{\mu(t)\mu_1(t)}\int_{\mathbb{R}}u^2(x,t)\,\phi_{\sigma}\left(\frac{x}{\mu_1(t)} \right)\left(\frac{x}{\mu_1(t)} \right)\, dx\\
     &= A(t)+B(t)+C(t).
\end{split}
\end{equation}
    
Combining the properties $\mu(t)$ and $\mu_1(t)$, the conservation of mass, and using \eqref{parameters}, it follows that
    \begin{equation}\label{l61-5}
    |A(t)|+|C(t)| \lesssim_{\sigma} \frac{\|u_0\|_{L^2}^2}{t^{2-b}\log^2t}.
    \end{equation}
Thus, the terms $A(t)$, $C(t)$ are integrable in $\{t \gg 1\}$.


Regarding $B(t)$, we use the equation in \eqref{BO} and integrate by parts to write
\begin{equation}\label{l61-7}
\begin{split}
B(t) &= - \frac{2}{\mu(t)}\int_{\mathbb{R}}\partial_xu\mathcal{H}\partial_xu \,\psi_{\sigma}\left(\frac{x}{\mu_1(t)} \right)  dx \\
    &\hskip15pt - \frac{2}{\mu(t)\mu_1(t)}\int_{\mathbb{R}}u\mathcal{H}\partial_xu\,\phi_{\sigma}\left(\frac{x}{\mu_1(t)} \right)  dx \\
    &\hskip15pt + \frac{2}{3\mu(t)\mu_1(t)}\int_{\mathbb{R}}u^3\phi_{\sigma}\left(\frac{x}{\mu_1(t)} \right)  dx \\
    &= B_1(t) + B_2(t) + B_3(t).
    \end{split}
    \end{equation}
    
From Hilbert's transform properties, integrating by parts and Cauchy-Schwarz inequality we obtain
\begin{equation}\label{l61-8}
 \begin{split}
|B_1(t)|=& \left|- \frac{1}{\mu(t)}\int_{\mathbb{R}}u\partial_x\left[\mathcal{H},\psi_{\sigma}\left(\frac{\cdot}{\mu_1(t)} \right)\right]\partial_xu  \,dx \right|\\ 
\leq& \frac{1}{\mu(t)}\|u\|_{L^2}\left\|\partial_x\Big[\mathcal{H},\psi_{\sigma}\left(\frac{\cdot}{\mu_1(t)} \right)\Big]\partial_xu\right\|_{L^2}.
 \end{split}
\end{equation}

  Lemma \ref{CC} gives us
 \begin{equation}\label{l61-9}
 \begin{split}
 |B_1(t)|  & \leq \frac{1}{\mu(t)}\|u\|_{L^2}^2\left\|\partial_x^2\psi_{\sigma}\left(\frac{\cdot}{\mu_1(t)} \right)\right\|_{L^{\infty}}\lesssim_{\sigma} \frac{1}{t^{1 + b}},
    \end{split}
    \end{equation}
which belongs to $L^1(\{t \gg 1\})$.

To estimate $B_2(t)$, we apply Plancherel's identity to obtain
\begin{equation}\label{l61-10}
 \begin{split}
 B_2(t) &= - \frac{2}{\mu_1(t)\mu(t)}\int_{\mathbb{R}}uD^{1/2}\left[D^{1/2},\phi_{\sigma}\left(\frac{\cdot}{\mu_1(t)} \right)\right]u  \,dx \\ 
  &\hskip15pt - \frac{2}{\mu_1(t)\mu(t)}\int_{\mathbb{R}}\left(D^{1/2}u\right)^2\phi_{\sigma}\left(\frac{x}{\mu_1(t)} \right) dx \\
  &= B_{2,1}(t) + B_{2,2}.
 \end{split}
\end{equation}

Notice that $B_{2,2}(t)$ is the term we want to estimate.

\medskip

To bound the term $B_{2,1}(t)$ we use Cauchy-Schwarz's inequality,  the conservation of mass, \eqref{comm-d-1/2}, and properties of the Fourier transform
to deduce that
\begin{equation}\label{l61-11}
 \begin{split}
 |B_{2,2}(t)| &\leq \left| \frac{2}{\mu_1(t)\mu(t)}\right| \|u_0\|_{L^2}
 \left\|\widehat{\left(\partial_x\phi_{\sigma}\left(\frac{\cdot}{\mu_1(t)} \right)\right)}\right\|_{L^1}\\
& \lesssim_{\sigma} \frac{1}{t^{1 + b}}  \in L^1(\{t \gg 1\}). 
 \end{split}
\end{equation}

Finally, notice that by Lemma \ref{GNSinequality}
    \begin{equation}\label{l61-12}
    \begin{aligned}
    & \int_{\mathbb{R}}|u|^3 \phi_{\sigma}\left(\frac{x}{\mu_1(t)} \right)dx  \\
    &  \leq \sum_{n \in \mathbb{Z}}\int_{\mathbb{R}}(|u|\,\zeta_n)^3 \phi_{\sigma}\left(\frac{x}{\mu_1(t)} \right)dx  \\
    & \leq \sum_{n \in \mathbb{Z}}\|u\, \zeta_n\|^3_{L^3}\left(\sup_{x \in [n, n+ 1]} \phi_{\sigma}\left(\frac{x}{\mu_1(t)} \right)\right)  \\
    & \lesssim \sum_{n \in \mathbb{Z}}\|u\,\zeta_n\|^2_{L^2}\|D^{1/2}(u\,\zeta_n)\|_{L^2}\left(\sup_{x \in [n, n+ 1]} \phi_{\sigma}\left(\frac{x}{\mu_1(t)} \right)\right).  
    \end{aligned}
    \end{equation}
   
Moreover, by Lemma \ref{leibnizhomog.} and hypothesis,
    \begin{equation}\label{l61-13}
    \begin{aligned}
     \|D^{1/2}(u\zeta_n)\|_{L^2}& \lesssim \left\|D^{1/2}u(t)\right\|_{L^2}\|\zeta_n\|_{L^{\infty}} + \|u(t)\|_{L^2}\|D^{1/2}\zeta_n\|_{L^{\infty}} \\
     & \lesssim \|u(t)\|_{H^{1/2}(\mathbb{R})} \lesssim \|u\|_{L_t^{\infty}H^{1/2}}.
     \end{aligned}
     \end{equation}
  
Combining these estimates we deduce that
\begin{equation*}\label{l61-14}
 \int_{\mathbb{R}}|u(x,t)|^3 \phi_{\sigma}\left(\frac{x}{\mu_1(t)} \right)dx\\
  \lesssim  \sum_{n \in \mathbb{Z}}\|u\,\zeta_n\|^2_{L^2}\left(\sup_{x \in [n, n+ 1]} \phi_{\sigma}\left(\frac{x}{\mu_1(t)} \right)\right).
\end{equation*}
A similar analysis to that given in Lemma 4.1 in  \cite{MMPP} (see also \cite{KM}) yields
\begin{equation}\label{l61-15}
 \int_{\mathbb{R}}|u(x,t)|^3 \phi_{\sigma}\left(\frac{x}{\mu_1(t)} \right)dx \lesssim \int_{\mathbb{R}}|u(x,t)|^2 \phi_{\sigma}\left(\frac{x}{\mu_1(t)} \right)\,dx .
\end{equation}

Using the properties of the function $\phi$ in \eqref{weight} for suitable $\delta$ and $\sigma$  we can apply Lemma \ref{intl2bounded} to deduce
that $B_3(t) \in L^1(\{t \gg 1\})$. 

\medskip

Collection the information in \eqref{l61-4}, \eqref{l61-5}, \eqref{l61-9}, \eqref{l61-11} and \eqref{l61-15}
we deduce that
\begin{equation*}
\frac{1}{t\log t}\int_{B_{t^b}} |D^{1/2}u(x,t)|^2\,dxdt\le \frac{d}{dt}\mathcal{J}(t)+g(t),
\end{equation*}
where  $\mathcal{J}(t)$ is bounded and $g(t)\in L^1(\{t\gg 1\})$.

A similar analysis as the one implemented in the proof of Theorem \ref{L2BO} yields the desired result.

\end{proof}

The reminder of the proof of Theorem \ref{H1BO} uses a similar argument as the proof of Theorem \ref{L2BO}, so we will omit it.

\section{Proof of Theorems \ref{to-the-right} and Theorem \ref{to-the-left}}\label{Sect:5}

The proofs of Theorems \ref{to-the-right} and \ref{to-the-left} are based on the following virial identity.

\begin{lemma} 
\label{virial-BO}

Let $u\in C(\mathbb R:H^1(\mathbb R))$ be the global real solution of IVP \eqref{BO}. Then for any weighted function $\varphi=\varphi(x,t)$ with 
$$
\varphi \in C(\mathbb R: L^{\infty} \cap \dot H^4(\mathbb R))
$$
the following identity holds
\begin{equation}
\label{id1}
\begin{aligned}
\frac{d}{dt}\int u^2(x,t)\varphi(x,t)\,dx &= -\int u \partial_x\big[\mathcal H;\varphi\big]\partial_x u \,dx\\
&\hskip12pt-\int (D^{1/2}_xu)^{1/2}\partial_x\varphi \,dx\\
&\hskip12pt-\int uD^{1/2}_x \big[D^{1/2}_x;\partial_x\varphi\big]u \,dx \\
&\hskip12pt+\frac{2}{3}\int u^3\partial_x\varphi \,dx+\int u^2\partial_t\varphi \,dx\\
&\equiv A_1+A_2+A_3+A_4+A_5.
\end{aligned}
\end{equation}
 
  \end{lemma}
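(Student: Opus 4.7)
The identity \eqref{id1} should follow from a direct time differentiation, substitution of the equation, and a sequence of integrations by parts together with the commutator algebra of $\mathcal H$ and $D^{1/2}$. Since $u\in C(\R:H^1)$ and $\varphi$ is smooth in $x$, every manipulation below is legitimate for a Schwartz approximating sequence, and then by density for the general solution; Lemmas~\ref{CC} and \ref{comm2} guarantee that the commutator operators act boundedly on $L^2$, which is what is needed to make the final integrals absolutely convergent. The plan is to first separate the trivial contributions $A_4$ and $A_5$ and then to unpack the dispersive piece $2\int u\mathcal{H}u_{xx}\varphi\,dx$ into the three symmetric integrals $A_1$, $A_2$, $A_3$.

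Differentiating under the integral gives
\[
\frac{d}{dt}\int u^2\varphi\,dx=2\int u u_t\varphi\,dx+\int u^2\varphi_t\,dx,
\]
which immediately yields $A_5$. Writing $u_t=\mathcal H u_{xx}-uu_x$ from \eqref{BO}, the nonlinear part is disposed of by the perfect derivative identity $-2\int u^2u_x\varphi\,dx=-\tfrac{2}{3}\int(u^3)_x\varphi\,dx=\tfrac{2}{3}\int u^3\varphi_x\,dx$, producing $A_4$.

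The heart of the argument is the dispersive term $2\int u\varphi\,\mathcal Hu_{xx}\,dx$. Using the identity $\mathcal H\partial_x=D$ (so $\mathcal Hu_{xx}=\partial_x(Du)$) and integrating by parts once, I would split it as
\[
2\int u\varphi\,\partial_x(Du)\,dx=-2\int \varphi\,u_x\,Du\,dx-2\int \varphi_x\,u\,Du\,dx.
\]
For the first summand I rewrite $Du=\mathcal Hu_x$ and symmetrize with the skew-adjointness $\mathcal H^*=-\mathcal H$, obtaining $2\int \varphi u_x\mathcal Hu_x\,dx=-\int u_x[\mathcal H;\varphi]u_x\,dx$; a final integration by parts transfers one $\partial_x$ outside the commutator and yields $A_1=-\int u\,\partial_x[\mathcal H;\varphi]\partial_xu\,dx$. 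For the second summand I factor $D=D^{1/2}D^{1/2}$ and use the self-adjointness of $D^{1/2}$:
\[
\int \varphi_x\,u\,Du\,dx=\int D^{1/2}(\varphi_xu)\,D^{1/2}u\,dx=\int \varphi_x(D^{1/2}u)^2\,dx+\int [D^{1/2};\varphi_x]u\cdot D^{1/2}u\,dx,
\]
and one more application of self-adjointness rewrites the commutator integrand as $u\,D^{1/2}[D^{1/2};\varphi_x]u$, producing the sum $A_2+A_3$.

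The step I expect to be the main source of friction is the bookkeeping for the dispersive piece: one has to make sure the symmetrization of $\int\varphi u_x\mathcal Hu_x$ and the $D^{1/2}$-splitting are performed in a way that distributes the factor of $2$ correctly across the three commutator integrals, and that every integration by parts is justified. The smallness of the boundary-type contributions in the IBPs is guaranteed because $u\in H^1$ decays in $L^2$ and $\varphi\in L^\infty\cap\dot H^4$ keeps all derivatives bounded, while $\partial_x[\mathcal H;\varphi]\partial_x$ and $D^{1/2}[D^{1/2};\varphi_x]$ are $L^2$-bounded by Lemma~\ref{CC} (with $k=m=1$) and Lemma~\ref{comm2}, respectively. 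Once this bookkeeping is complete, collecting the pieces from the nonlinear term, the $A_1$ commutator term, and the $A_2+A_3$ pair, together with $A_5$ from the $\varphi_t$ term, assembles exactly the right-hand side of \eqref{id1}.
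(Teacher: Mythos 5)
Your proposal is correct and follows essentially the same route as the paper's proof: differentiate, substitute the equation to isolate $A_4$ and $A_5$, integrate the dispersive term by parts into two pieces, symmetrize the $\int \varphi\,u_x\mathcal H u_x$ piece via skew-adjointness of $\mathcal H$ to produce $A_1$, and split $D=D^{1/2}D^{1/2}$ with the commutator $[D^{1/2};\partial_x\varphi]$ to produce $A_2+A_3$. The factor-of-$2$ bookkeeping you flag does work out exactly as in the paper's computation (which yields $-2\int(D^{1/2}u)^2\partial_x\varphi$ and $-2\int uD^{1/2}[D^{1/2};\partial_x\varphi]u$; the missing $2$'s and the exponent $(D^{1/2}_xu)^{1/2}$ in the displayed statement are typos in the paper, not an obstacle in your argument).
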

  
\begin{remark}\label{rem-imp}
 The terms $A_1, A_2, A_3$ derive from the (linear) dispersive part of the equation. $A_2$ corresponds to the local smoothing effect of Kato type, first deduced in solutions of the KdV equation \cite{Ka}. As it was proved in Section 2 the terms $A_1, A_3$ are of order zero on $u$ and of order two, in the homogeneous sense,  on the weighted function $\varphi$.

\end{remark}

 \begin{proof} [Proof of Lemma \ref{virial-BO}]
 Using the equation we get
 \begin{equation}
 \label{eq1}
 \begin{aligned}
  \frac{d}{dt}\int &u^2(x,t)\varphi(x,t)\, dx =2\int u\partial_tu\varphi \, dx+\int u^2\partial_t\varphi \,dx\\
&=2\int  u(\mathcal H\partial_x^2u-u\partial_xu)\varphi \,dx +\int u^2\partial_t\varphi \,dx\\
&=2\int u\mathcal H\partial_x^2u \varphi \,dx+\frac{2}{3} \int u^3 \partial_x\varphi \,dx +\int u^2\partial_t\varphi \,dx\\
&=2\int u\mathcal H\partial_x^2u \varphi \,dx+A_4+A_5.
\end{aligned}
\end{equation}
By integration by parts it follows that
  \begin{equation}
 \label{eq2}
2\int u\mathcal H\partial_x^2u \varphi \,dx=-2\int \partial_xu \mathcal H\partial_xu \varphi \,dx -2\int u \mathcal H\partial_xu \partial_x\varphi \,dx.
\end{equation}
Since
  \begin{equation}
 \label{eq3}
 \begin{aligned}
 -\int \partial_xu \mathcal H\partial_xu \varphi \,dx&= \int \partial_xu \mathcal H\left(\partial_xu \varphi\right)\,dx\\
 &=\int \partial_xu \mathcal H\partial_xu \varphi \,dx +\int \partial_xu \big[\mathcal H;\varphi\big]\partial_xu\,dx,
 \end{aligned}
 \end{equation}
 one has that
 \begin{equation}
 \label{eq4}
 \begin{aligned}
-2\int \partial_xu \mathcal H\partial_xu \varphi \,dx &=\int \partial_xu \big[\mathcal H;\varphi\big]\partial_xu\,dx\\
 &=- \int u \partial_x\big[\mathcal H;\varphi\big]\partial_xu\,dx=A_1.
  \end{aligned}
 \end{equation}
Also 
 \begin{equation}
 \label{eq5}
 \begin{split}
 -2\int u &\mathcal H\partial_xu \partial_x\varphi \,dx=-2\int u D_xu \partial_x\varphi \,dx\\
&= -2\int D_x^{1/2}u D^{1/2}_x(u\partial_x\varphi)\,dx\\
&= -2\int D_x^{1/2}u D^{1/2}_xu\partial_x\varphi \,dx
 -2\int \!D_x^{1/2}u \big[D^{1/2}_x;\partial_x\varphi\big]u\,dx\\
 &=-2\int D_x^{1/2}u D^{1/2}_xu\partial_x\varphi \,dx-2\int \!u D_x^{1/2}\big[D^{1/2}_x;\partial_x\varphi\big]u\,dx\\
 &=A_2+A_3.
  \end{split}
 \end{equation} 
 Inserting \eqref{eq4} and \eqref{eq5} in \eqref{eq2}, and this in \eqref{eq1} we obtain \eqref{id1}.

\end{proof}

\subsection{Proof of Theorem \ref{to-the-right}}

  
  First, we fix
  \begin{equation}
  \label{weight1}
  \varphi(x,t)=\chi \left(\frac{x-c_1}{c_0 t}\right),
  \end{equation}
  with $\chi$ 
satisfying
\begin{equation}
\label{chi}
\begin{cases}
\chi\in C^\infty(\R), \quad 0\leq \chi \leq 1 \quad \hbox{ in}\quad \R, \\
\chi(s) \equiv 0 \hskip34pt \hbox{if}\quad s\leq 1, \quad  \chi(s) \equiv 1 \quad \hbox{if}\quad s\geq 2, \\
\chi' (s) >0, \hskip26pt \hbox{in}\quad (1,2),\\
|\chi^{(k)} (s)|\leq 2^k, \hskip5pt \hbox{in}\quad (-1,0), \quad k=1,2,3,
\end{cases}
\end{equation}
  and $c_0, c_1$ constants to be chosen latter. We observe that
  \begin{equation}
  \label{chi1}
  \partial_t \chi \left(\frac{x-c_1}{c_0 t}\right)=\chi' \left(\frac{x-c_1}{c_0 t}\right)\left(\frac{x-c_1}{c_0 t}\right)\frac{-1}{t}\leq \chi'(\cdot) \,\frac{-1}{t}
  \end{equation}
 and
 \begin{equation}
  \label{chi2}
  \partial_x \chi \left(\frac{x-c_1}{c_0 t}\right) =\chi'(\cdot)\,\frac{1}{c_0 t}
  \end{equation} 
  
  With the notation in \eqref{id1} and using the commutator estimates in Lemmas \ref{CC} and \ref{comm2} it follows that
  \begin{equation}
  \label{est1}
  \begin{aligned}
  |A_1| &\leq \frac{c}{c_0^2 t^2} \| \chi''\|_{L^\infty} \|u(t)\|^2_{L^2},\\
  A_2 & \leq 0,\\
  |A_4|&\leq  \frac{2\,\|u(t)\|_{\infty}}{3 c_0 t}\,\int u^2(x,t)\chi'\left(\frac{x-c_1}{c_0t}\right) \,dx,\\
  A_5&\leq \frac{-1}{t}\,\int u^2(x,t)\chi'\left(\frac{x-c_1}{c_0t}\right) \,dx,
  \end{aligned}
  \end{equation}
  and
  \begin{equation}
  \label{est2}
  \begin{aligned}
  |A_3|&\leq \left(\frac{1}{c_0t}\right)^{5/2}\| \chi''(\cdot)\|_{L^2}^{1/2}\| \chi'''(\cdot)\|_{L^2}^{1/2}\|u(t)\|^2_{L^2}\\
  &\leq c_{\chi}\left(\frac{1}{c_0t}\right)^{2}\|u(t)\|^2_{L^2}.
  \end{aligned}
  \end{equation}
  
  Inserting the above estimates in \eqref{id1} and using  the conservation laws of the BO equation one finds that
  \begin{equation}
\label{est3}
\begin{aligned}
& \frac{d}{dt}\int u^2(x,t)\chi\left(\frac{x-c_1}{c_0 t}\right)\,dx \\
& \leq  \frac{c_{\chi}}{(c_0 t)^{2}}\|u_0\|_{L^2}^2\\
&\;\; + \left( \frac{2 c_2 \|u_0\|_{H^1}}{3 c_0 t} - \frac{1}{t}\right) \int u^2(x,t)\chi'\left(\frac{x-c_1}{c_0t}\right) \,dx 
   \end{aligned}
  \end{equation}
with $c_2$ a universal constant. Thus, we take $c_0$ such that
 \[
 \frac{2 c_2 \| u_0\|_{H^1}}{3 c_0} <1,
 \]
 and for any given  $\epsilon>0$ we fix $t_1>0$ such that
 \[
 \|u_0\|^2_2\;\int_{t_1}^{\infty}\frac{c_{\chi}}{(c_0 t)^{2}} \,dt\leq \epsilon.
 \]
 Hence, integration \eqref{est3} in the time interval $[t_1,t_2]$ we find that
 \begin{equation}
\label{est4}
 \int u^2(x,t_2)\chi\left(\frac{x-c_1}{c_0 t_2}\right)\,dx \leq \int u^2(x,t_1)\chi\left(\frac{x-c_1}{c_0 t_1}\right)\,dx + \epsilon
 \end{equation}
 Next, we fix $c_1>0 $ such that
 $$
 \int u^2(x,t_1)\chi\left(\frac{x-c_1}{c_0 t_1}\right)\,dx \leq \epsilon,
$$
 to get that for any $t_2>t_1$
 $$
   \int_{x>c_1+2c_0t_2} u^2(x,t_2)\,dx \leq \int u^2(x,t_2)\chi\left(\frac{x-c_1}{c_0 t_2}\right)\,dx +\epsilon \leq 2\epsilon.
 $$
 Finally, fixing $C_0=3c_0$ we have that
 $$
 \limsup_{t\to \infty}  \int_{x>C_0t} u^2(x,t)\,dx \leq 2\epsilon.
 $$ 
 which yields the desired result \eqref{main}.

\subsection{Proof of Theorem \ref{to-the-left}}

 First we fix
  \begin{equation}
  \label{weight2}
  \varphi(x,t)=\beta \left(\frac{x+c_1}{\mu(t)}\right),
  \end{equation}
  with $\beta$ 
satisfying
\begin{equation}
\label{phi}
\begin{cases}
\beta\in C^\infty(\R), \quad 0\leq \beta \leq 1 \quad \hbox{ in}\quad \R, \\
\beta(s) \equiv 1 \hskip35pt \hbox{if}\quad s\leq -2, \quad  \beta(s) \equiv 0 \quad \hbox{if}\quad s\geq -1, \\
\beta' (s) <0, \hskip26pt \hbox{in}\quad (-2,-1),\\
|\beta^{(k)} (s)|\leq 2^k, \hskip5pt\hbox{in}\quad (-2,-1), \quad k=1,2,3,
\end{cases}
\end{equation}
with
 \begin{equation}
 \label{mu-def}
 \mu(t)=c_2t\,\log^{1+\eta}t,\;\;\;\;\;\eta>0,
 \end{equation}
 $c_1$ a constant to be chosen latter and $c_2>0$ an arbitrary constant. We observe that
  \begin{equation}
  \label{phi1}
  \partial_t \beta \left(\frac{x+c_1}{\mu(t)}\right)=\beta' \left(\frac{x+c_1}{\mu(t)}\right)\left(\frac{x+c_1}{\mu(t)}\right)\frac{-\mu'(t)}{\mu(t)}\leq 0,
  \end{equation}
 and
 \begin{equation}
  \label{phi20}
  \partial_x \beta \left(\frac{x+c_1}{\mu(t)}\right)=\beta' \left(\frac{x+c_1}{\mu(t)}\right)\,\frac{1}{\mu(t)}.
  \end{equation} 

 With the notation in \eqref{id1} from commutator estimates in Lemma \ref{CC} and Lemma \ref{comm2} it follows that
  \begin{equation}
  \label{est21}
  \begin{aligned}
  |A_1| &\leq \frac{c}{\mu^2(t)} \| \beta''\|_{L^\infty} \|u(t)\|^2_{L^2}\leq \frac{c_{\beta}}{\mu^2(t)}  \|u(t)\|^2_{L^2},\\
  A_2 & \leq \frac{c}{\mu(t)}  \| \beta'\|_{L^\infty} \left\|D_x^{1/2}u(t)\right\|_{L^2}^2\leq  \frac{c_{\beta}}{\mu(t)}   \left\|D_x^{1/2}u(t)\right\|_{L^2}^2,\\
   |A_3|&\leq \frac{c}{\mu^{5/2}(t)}\| \beta''(\cdot)\|_{L^2}^{1/2}\| \beta'''(\cdot)\|_{L^2}^{1/2}\|u(t)\|^2_{L^2}\\
  &\leq \frac{c_{\beta}}{\mu^{2}(t)}\,\|u(t)\|^2_{L^2},\\
  |A_4|&\leq  \frac{2\,\|u(t)\|_{L^3}^3\,\|\beta'\|_{L^\infty}}{3 \mu(t)}\leq  \frac{c_{\beta}\,\|u(t)\|_{L^\infty}\|u(t)\|_{L^2}^2}{\mu(t)},\\
  A_5&\leq 0.
  \end{aligned}
  \end{equation}
  
Inserting \eqref{est21} into the virial identity \eqref{id1} and using the conservation laws of the BO equation one sees that there exists $$K_0=K_0\left( \beta;\|u_0\|_{L^2};\|D_x^{1/2}u_0\|_{L^2};\|\partial_xu_0\|_{L^2} \right)>0$$ such that
\begin{equation}
\label{est22}
\frac{d}{dt}\int u^2(x,t) \beta \left(\frac{x+c_1}{\mu(t)}\right)\,dx\leq \frac{K_0}{t\,\log^{1+\eta}t}.
\end{equation}

Thus, given any $\epsilon>0$ we take $t_0>1$ such that 
\begin{equation}
\label{t_0}
\int_{t_0}^{\infty} \frac{K_0}{c_2 t\,\log^{1+\eta}t} dt\leq \epsilon
\end{equation}
  to get that for any $t_1>t_0$
 \begin{equation}
\label{est23}
\begin{aligned}
&\int u^2(x,t_1) \beta \left(\frac{x+c_1}{c_2t_1\,\log^{1+\eta}t_1}\right)\,dx\\
&\qquad \leq \int u^2(x,t_0) \beta \left(\frac{x+c_1}{c_2t_0\,\log^{1+\eta}t_0}\right)\,dx +\epsilon.
\end{aligned}
\end{equation} 
 By taking  $c_1$ such that
\[
\int u^2(x,t_0) \beta \left(\frac{x+c_1}{c_2 t_0\,\log^{1+\eta}t_0}\right)\,dx <\epsilon,
\]
one finds that for any $t_1>t_0$
   \begin{equation}
\label{est24}
\begin{aligned}
&\int_{x<-c_1-2c_2t_t\,\log^{1+\eta}t_1} \!\!\! u^2(x,t_1) \,dx\\
&\qquad \leq \int u^2(x,t_1) \beta \left(\frac{x+c_1}{c_2t_1\,\log^{1+\eta}t_1}\right)\,dx\leq 2\epsilon.
\end{aligned}
\end{equation} 
Hence,
\begin{equation}
\label{est25}
\limsup_{t\to \infty}\int_{x<-3c_2t\,\log^{1+\eta}t} u^2(x,t) \,dx\leq 2\epsilon.
\end{equation}

 Since $\epsilon>0$ and $c_2>0$ are arbitrary we finish the proof.

\appendix

\section{Some auxiliary results}\label{Sect:6}

 \begin{proof} [Proof of Lemma \ref{comm2}] One sees that
 
 \begin{equation}
 \label{comm-proof1}
 \begin{aligned}
 &\widehat{\left(D^{1/2} \big [D^{1/2}; a\big] f\right)}(\xi)\\
 &=|\xi|^{1/2}\int |\xi|^{1/2}\left(\widehat{a}(\xi-\eta)\widehat{f}(\eta)-\widehat{a}(\xi-\eta)|\eta|^{1/2}\widehat{f}(\eta)\right)d\eta.
 \end{aligned}
 \end{equation}
 Therefore
 \begin{equation}
 \label{comm-proof2}
 \begin{aligned}
 &\big|\widehat{\left(D^{1/2} \big [D^{1/2}; a\big] f\right)}(\xi)\big|\\
 &\leq \int |\xi|^{1/2}\,\left| |\xi|^{1/2}-|\eta|^{1/2}\right| \left| \widehat{a}(\xi-\eta)\widehat{f}(\eta) \right| d \eta.
  \end{aligned}
 \end{equation}
 Assuming the following claim : 
 \begin{equation}
 \label{claim}
\exists \,c>0 \;\,s.t.\;\, \forall \,\xi, \, \eta\in\mathbb R\;\;\;\;\;
  |\xi|^{1/2}\big| |\xi|^{1/2}-|\eta|^{1/2}\big| \leq c|\xi-\eta|,
 \end{equation}
we shall conclude the proof.  From \eqref{claim} it follows that
\begin{equation}
 \label{comm-proof3}
 \begin{split}
 E_1(\xi)&\equiv \big|\widehat{\left(D^{1/2} \big [D^{1/2}; a\big] f\right)}(\xi)\big|\\
& \leq c \int |\xi-\eta| |\widehat{a}(\xi-\eta)\widehat{f}(\eta)| d \eta= c \int |\widehat{a'}(\xi-\eta)\widehat{f}(\eta)| d \eta.
  \end{split}
 \end{equation}
 Thus,
 \begin{equation}
 \label{comm-proof4}
\|E_1\|_2=\| \widehat{a'}\ast \widehat{f}\|_{L^2} \leq \| \widehat{a'}\|_{L^1} \|f\|_{L^2}.
\end{equation}
Using that
\begin{equation}
 \label{comm-proof5}
 \begin{split}
\| \widehat{a'}\|_1 & =\int_{|\xi|\leq R}|\widehat{a'}| d\xi + \int_{|\xi|>R} \frac{|\xi| |\widehat{a'}|}{|\xi|}\,d\xi\\
&\leq c R^{1/2} \| \widehat{a'}\|_{L^2} + c R^{-1/2}  \| \widehat{a''}\|_{L^2}.
 \end{split}
 \end{equation}
Choosing $R= \| \widehat{a''}\|_{L^2}^{1/2}/ \| \widehat{a'}\|_{L^2}^{1/2}$ we obtain \eqref{comm-d-1/2}. 

It remains to proof the claim in \eqref{claim}. First, we consider the case where $\xi$ and $\eta$ have the same sign, so we assume $\xi, \,\eta>0$. In this setting one sees that for some $\theta \in (0,1)$
\begin{equation}\label{1234}
\big| \xi^{1/2}-\eta^{1/2}\big|=\frac{1}{(\theta\xi+(1-\theta)\eta)^{1/2}}\,|\xi-\eta|.
\end{equation}
Thus, if $0<\xi/10<\eta$, \eqref{1234} yields the estimate in \eqref{claim}. 

If $0<\eta\leq \xi/10,$ one has
\[
\xi^{1/2}\left(\xi^{1/2}-\eta^{1/2}\right) \leq \xi\leq 2|\xi-\eta|.
\]

In the case where $\xi, \eta$ have different signs, one sees that
$$
|\xi-\eta|=|\xi|+|\eta|,
$$ and the estimate \eqref{claim} holds.

\end{proof}

\vspace{0.5cm}
\noindent{\bf Acknowledgements.}  R.F. was partially supported by CAPES, F.L. was partially supported by CNPq grant 305791/2018-4 and FAPERJ grant E-26/202.638/2019. C.M.'s work was funded in part by Chilean research grants FONDECYT 1191412, MathAmSud EEQUADD II, and Centro de Modelamiento Matem\'atico (CMM), ACE210010 and FB210005, BASAL funds for centers of excellence from ANID-Chile.

\medskip


\end{document}